\def\BBox{\kern  -0.2cm\hbox{\vrule width 0.2cm height 0.2cm}}
\def\subjclass#1{{\renewcommand{\thefootnote}{}%
\footnote{\emph{Mathematics Subject Classification (2010):} #1}}}
\def\keywords#1{\par\medskip
\noindent\textbf{Keywords.} #1}
\newtheorem{theorem}{Theorem}
\newtheorem{lemma}{Lemma}
\newtheorem{corollary}{Corollary}
\newtheorem{proposition}{Proposition}
\newcommand{\poo}{\mathcal{P}}
\newcommand{\po}{\mathcal{M}}
\newcommand{\fl}{\mathcal{F}}
\newcommand{\bl}{\mathcal{B}}
\newcommand{\Aut}{\mathrm{Aut}}
\newcommand{\gr}{\mathcal{G_M}}
\newcommand{\grr}{\mathcal{G_P}}
\newcommand{\Gpp}{\mathcal{G_P}}
\newcommand{\Mon}{\mathrm{Mon}}
\newcommand{\ma}{\mathcal{M}}
\newcommand{\W}{\mathcal{W}}
\title{Symmetry Type Graphs of Polytopes and Maniplexes}
\author{Gabe Cunningham\footnote{gabriel.cunningham@gmail.com}\\
{\small University of Massachusetts Boston, USA};\\
Mar\'ia del R\'io-Francos\footnote{maria.delrio@fmf.uni-lj.si} \\
{\small Institute of Mathematics Physics and Mechanics}\\
	{\small  University of Ljubljana, Slovenia};\\
Isabel Hubard\footnote{isahubard@im.unam.mx}  \ and Micael Toledo\footnote{micael50@hotmail.com}\\
{\small Instituto de Matem\'{a}ticas} \\
	{\small  Universidad Nacional Aut\'{o}noma de M\'{e}xico, M\'{e}xico}}
\begin{document}

\maketitle

\begin{abstract}
A $k$-orbit maniplex is one that has $k$ orbits of flags under the action of its automorphism group.
In this paper we extend the notion of symmetry type graphs of maps to that of  maniplexes and polytopes
and make use of them to study $k$-orbit maniplexes, as well as fully-transitive 3-maniplexes. 
In particular, we show that there are no fully-transtive $k$-orbit 3-mainplexes with $k > 1$ an odd number, we classify 3-orbit mainplexes and determine all face transitivities for 3- and 4-orbit maniplexes.
Moreover, we give generators of the automorphism group of a polytope or a maniplex, given its symmetry type graph. 
Finally, we extend these notions to oriented polytopes, in particular we classify oriented 2-orbit maniplexes and give generators for their orientation preserving automorphism group.
\end{abstract}

\keywords{Abstract Polytope, Regular Graph, Edge-Coloring, Maniplex}
\subjclass{Primary 52B15; Secondary 05C25, 51M20}

\section{Introduction}
While abstract polytopes are a combinatorial generalisation of classical polyhedra and polytopes, maniplexes generalise maps on surfaces and (the flag graph of) abstract polytopes. 
The combinatorial structure of maniplexes, maps and polytopes is completely determined by a edge-coloured $n$-valent graph with chromatic index $n$, often called the flag graph.
The symmetry type graph of a map is the quotient of its flag graph under the action of the automorphism group.
In this paper we extend the notion of symmetry type graphs of maps to that of maniplexes (and polytopes).
Given a maniplex, its symmetry type graph encapsulates all the information of the local configuration of the flag orbits under the action of the automorphism group of the maniplex. 

Traditionally, the main focus of the study of maps and polytopes has been that of their symmetries.
Regular and chiral ones have been extensively studied. These are maps and polytopes with either maximum degree of symmetry or maximum degree of symmetry by rotation.
Edge-transitive maps were studied in \cite{edge-trans} by Siran, Tucker and Watkins. 
Such maps have either 1, 2 or 4 orbits of flags under the action of the automorphism group.
More recently Orbani\'c, Pellicer and Weiss extend this study and classify $k$-orbit maps (maps with $k$ orbits of flags under the automorphism group) up to $k\leq4$ in \cite{k-orbitM}.
 Little is known about polytopes that are neither regular nor chiral. In \cite{tesisisa} Hubard gives a complete characterisation of the automorphism group of 2-orbit and fully-transitive polyhedra (i.e. polyhedra transitive on vertices, edges and faces) in terms of distinguished generators of them. Moreover, she finds generators of the automorphism group of a 2-orbit polytope of any given rank.

Symmetry type graphs of the Platonic and Archimedean Solids were determined in \cite{Archim}.
In \cite{medial} Del R\'io-Francos, Hubard, Orbani\'c and Pisanski determine symmetry type graphs of up to 5 vertices and give, for up to 7 vertices, the possible symmetry type graphs that a properly self-dual, an improperly self-dual and a medial map might have. 
The possible symmetry type graphs that a truncation of a map can have is determined in \cite{trunc}.
One can find in \cite{CompSymTypeGraph}  a strategy to generate symmetry type graphs.

By making use of symmetry type graphs, in this paper we classify 3-orbit polytopes and
give generators of their automorphism groups.
In particular, we show that 3-orbit polytopes are never fully-transitive, but they are $i$-face-transitive for all $i$ but one or two, depending on the class.
We extend further the study of symmetry type graphs to show that if a 4-orbit polytope is not fully-transitive, then it is $i$-face-transitive for all $i$ but at most three ranks.
Moreover, we show that a fully-transitive 3-maniplex (or 4-polytope) that is not regular cannot have an odd number of orbits of flags, under the action of the automorphism group.

The main result of the paper is stated in Theorem~\ref{auto}. Given a maniplex $\ma$ in Theorem~\ref{auto} we give generators for the automorphism group of $\ma$ with respect to some base flag.

The paper is divided into six sections, organised in the following way. 
In Section \ref{sec:PolyMani},
 we review some basic theory of polytopes and maniplexes, and  describe their respective flag graphs. 
In Section \ref{sec:stg}, 
we define and give some properties of the symmetry type graphs of polytopes and maniplexes, extending the concept of symmetry type graphs of maps.
In Section \ref{sec:stg-highly},
we study symmetry type graphs of highly symmetric maniplexes. In particular, we
classify symmetry type graphs with 3 vertices, determine the possible transitivities that a 4-orbit mainplex can have and study some properties of fully-transitive maniplexes of rank 3.
In Section \ref{Gen-autG}
we give  generators of the automorphism group of a polytope or a maniplex. In the last section of the paper we define oriented and orientable maniplexes. Further on, we define the oriented flag di-graph which emerge from a flag graph if this is bipartite. 
The oriented symmetry type di-graph of an oriented maniplex is then a quotient of the oriented flag di-graph, just as the symmetry type graph was a quotient of the flag graph.
Using these graphs we classify oriented 2-orbit maniplexes and give generators for their orientation preserving automorphism group.

\section{Abstract Polytopes and Maniplexes}
\label{sec:PolyMani}

\subsection{Abstract Polytopes}

In this section we briefly review the basic theory of abstract polytopes and their monodromy groups (for details we refer the reader to \cite{arp} and \cite{d-auto}). 

An (\emph{abstract\/}) \emph{polytope of rank\/} $n$, or simply an \emph{$n$-polytope\/}, is a partially ordered set $\mathcal{P}$ with a strictly monotone rank function with range $\{-1,0, \ldots, n\}$. 
An element of rank $j$ is called a \emph{$j$-face\/} of $\mathcal{P}$, and a face of rank $0$, $1$ or $n-1$ is called a \emph{vertex\/}, \emph{edge\/} or \emph{facet\/}, respectively. 
A {\em chain of $\poo$} is a totally ordered subset of $\poo$. 
The maximal chains, or \emph{flags}, all contain exactly $n + 2$ faces, including a unique least face $F_{-1}$ (of rank $-1$) and a unique greatest face $F_n$ (of rank $n$).  
A polytope $\mathcal{P}$ has the following homogeneity property (diamond condition):\ whenever $F \leq G$, with $F$ a $(j-1)$-face and $G$ a $(j+1)$-face for some $j$, then there are exactly two $j$-faces $H$ with $F \leq H \leq G$. Two flags are said to be \emph{adjacent} ($i$-\emph{adjacent}) if they differ in a single face (just their $i$-face, respectively).
The diamond condition can be rephrased by saying that every flag $\Phi$ of $\poo$ has a unique $i$-adjacent flag, denoted $\Phi^i$, for each $i=0, \dots, n-1$. Finally, $\mathcal{P}$ is  \emph{strongly flag-connected}, in the sense that, if $\Phi$ and $\Psi$ are two flags, then they can be joined by a sequence of successively adjacent flags, each containing $\Phi \cap \Psi$.

Let $\mathcal{P}$ be an abstract $n$-polytope. 
The {\em universal} string Coxeter group $W := [\infty,\ldots,\infty]$ of rank $n$, with distinguished involutory generators $r_0,$ $r_1,\ldots,r_{n-1}$, acts transitively on the set of flags $\fl(\poo)$ of $\poo$ in such a way that $\Psi^{r_i}  = \Psi^i$, the $i$-adjacent flag of $\Psi$, for each $i=0,\dots,n-1$ and each $\Psi$ in $\fl(\poo)$. In particular, if $w=r_{i_1}\ldots r_{i_k} \in W$ then 
\[ \Psi^w  =\Psi^{r_{i_{1}} r_{i_2}\ldots r_{i_{k-1}}r_{i_k}} 
  =: \Psi^{i_1,i_2,\ldots,i_{k-1},i_k}. \]
The {\em monodromy or connection group} of $\poo$ (see for example~\cite{d-auto}), denoted $\Mon(\poo)$, is the quotient of $W$ by the normal subgroup $K$ of $W$ consisting of those elements of $W$ that act trivially on $\fl(\poo)$ (that is, fix every flag of $\poo)$. Let 
\[ \pi: W \to \Mon(\poo)=W/K \] 
denote the canonical epimorphism. 
Clearly, $\Mon(\poo)$ acts on $\fl(\poo)$ in such a way that $\Psi^{\pi(w)}=\Psi^w$ for each $w$ in $W$ and each $\Psi$ in $\fl(\poo)$, so in particular $\Psi^{\pi(r_i)}=\Psi^{i}$ for each $i$. We slightly abuse notation and also let $r_i$ denote the $i$-th generator $\pi(r_i)$ of $\Mon(\poo)$. We shall refer to these $r_i$ as the {\em distinguished} generators of $\Mon(\poo)$.

Since the action of $W$ is transitive on the flags, 
the action of $\Mon(\poo)$ on the flags of $\poo$ is also transitive; moreover, this action is faithful, since only the trivial element of $\Mon(\poo)$ fixes every flag. Thus $\Mon(\poo)$ can be viewed as a subgroup of the symmetric group on $\fl(\poo)$. 
Note that for every flag $\Phi$ of $\poo$ and $i, j \in \{0, \dots, n-1\}$ such that $|i-j|\geq 2$, we have that $\Phi^{r_ir_j}=\Phi^{i,j}=\Phi^{j,i}=\Phi^{r_jr_i}$. Since the action of $\Mon(\poo)$ is faithful in $\fl(\poo)$, this implies that $r_ir_j=r_jr_i$, whenever $|i-j|\geq 2$.

An {\em automorphism} of a polytope $\poo$ is a bijection of $\poo$ that preserves the order. 
We shall denote by $\Aut(\poo)$ the group of automorphisms of $\poo$. 
Note that any automorphism of $\poo$ induces a bijection of its flags that preserves the $i$-adjacencies, for every $i \in \{0, 1,  \dots, n-1\}$. 
A polytope $\poo$ is said to be {\em regular} if the action of $\Aut(\poo)$ is regular on $\fl(\poo)$. 
If $\Aut(\poo)$ has exactly 2 orbits on $\fl(\poo)$ in such a way that adjacent flags belong to different orbits, $\poo$ is called a {\em chiral polytope}.
We say that a polytope is a {\em $k$-orbit polytope} if the action of $\Aut(\poo)$ has exactly $k$ orbits on $\fl(\poo)$. Hence, regular polytopes are 1-orbit polytopes and chiral polytopes are 2-orbit polytopes.

Given an $n$-polytope $\poo$, we define the {\em graph of flags} $\Gpp$ of $\poo$ as follows. The vertices of $\Gpp$ are the flags of $\poo$, and we put an edge between two of them whenever the corresponding flags are adjacent. Hence $\Gpp$ is $n$-valent (i.e. every vertex of $\Gpp$ has exactly $n$ incident edges; to reduce confusion we avoid the alternative terminology `$n$-regular'). Furthermore, we can colour the edges of $\Gpp$ with $n$ different colours as determined by the adjacencies of the flags of $\poo$. That is, an edge of $\Gpp$ has colour $i$, if the corresponding flags of $\poo$ are $i$-adjacent. In this way every vertex of $\Gpp$ has exactly one edge of each colour (see Figure \ref{fig:baricentic}).

\begin{figure}[htbp]
\begin{center}
\includegraphics[width=5cm]{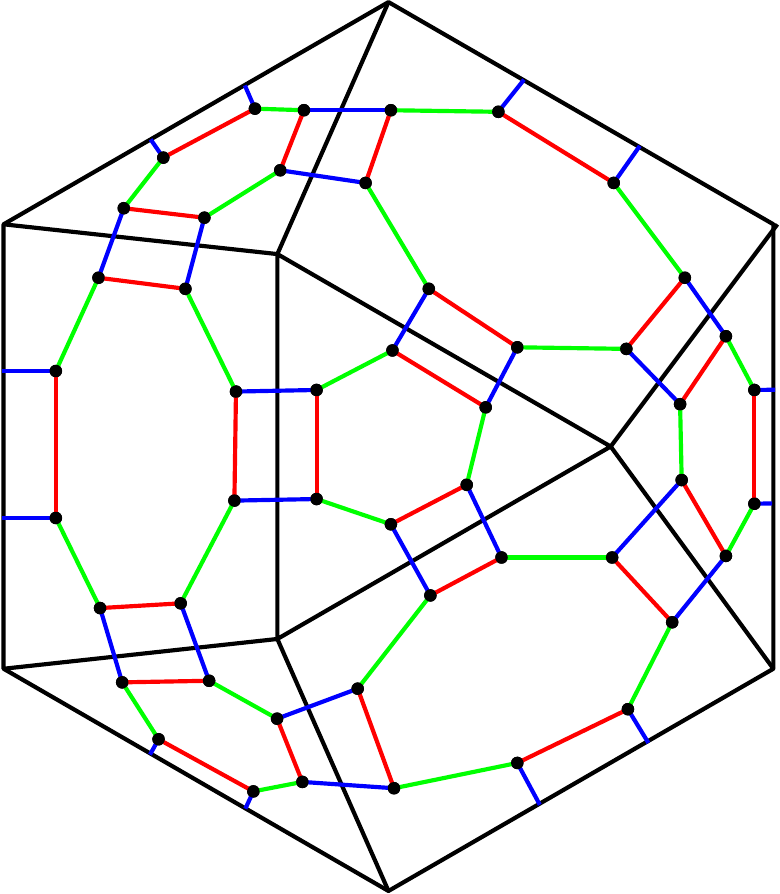}
\caption{The graph of flags of a cubeoctahedron}
\label{fig:baricentic}
\end{center}
\end{figure}

It is straightforward to see that each automorphism of $\poo$ induces an automorphism of the flag graph $\grr$ that preserves the colours. 
Conversely, every automorphism of $\grr$ that preserves the colours is a bijection of the flags that preserves all the adjacencies, inducing an automorphism of $\poo$. That is, the automorphism group $\Aut(\poo)$ of $\poo$ is the colour preserving automorphism group $\Aut_p(\grr)$ of $\grr$.

Note that the connectivity of $\poo$ implies that the action of $\Aut(\poo)$ on $\fl(\poo)$ is free (or semiregular). Hence, the action of $\Aut_p(\grr)$ is free on the vertices of the graph $\grr$.

One can re-label the edges of $\grr$ and assign to them the generators of $\Mon(\poo)$. In fact, since for each flag $\Phi$, the action of $r_i$ takes $\Phi$ to $\Phi^{r_i}$, by thinking of the edge of colour $i$ of $\grr$ as the generator $r_i$, one can regard a walk along the edges of $\grr$ as an element of $\Mon(\poo)$. That is, if $w$ is a walk along the edges of $\Mon(\poo)$ that starts at $\Phi$ and finishes at $\Psi$, then we have that $\Phi^w=\Psi$.
Hence, the connectivity of $\poo$ also implies that the action of $\Mon(\poo)$ is transitive on the vertices of $\grr$. 
Furthermore, since the $i$-faces of $\poo$ can be regarded as the orbits of flags under the action of the subgroup $H_i=\langle r_j \mid j\neq i \rangle$, the $i$-faces of $\poo$ can be also regarded as the connected components of the subgraph of $\grr$ obtained by deleting all the edges of colour $i$.

\subsection{Maniplexes}

Maniplexes were first introduced by Steve Wilson in \cite{mani}, aiming to unify the notion of maps and polytopes. In this section we review the basic theory of them.

An \emph{$n$-complex} $\mathcal{M}$ is defined by a set of {\em flags} $\fl$ and a sequence $(r_0,$ $ r_1, \dots, r_n)$, such that each $r_i$ partitions the set $\fl$ into sets of size 2 and the partitions defined by $r_i$ and $r_j$ are disjoint when $i\neq j$. Furthermore, we ask for $\po$ to be {\em connected} in the following way. Thinking of the $n$-complex $\mathcal{M}$ as the graph $\mathcal{G}$ with vertex set $\fl$, and with edges of colour $i$ corresponding to the matching $r_i$, we ask for the graph $\mathcal{G}$ indexed by $\po$ to be connected. 
An \emph{$n$-maniplex} is an $n$-complex such that the elements in the sequence $(r_0, r_1, \dots, r_n)$ correspond to the distinguished involutory generators of a Coxeter string group. 
In terms of the graph  $\mathcal{G}$, this means that the connected components of the induced subgraph with edges of colours $i$ and $j$, with $| i - j | \geq 2$ are 4-cyles.
We shall refer to the {\em rank} of an $n$-maniplex, precisely to $n$.

A 0-maniplex must be a graph with two vertices joined by an edge of colour 0. A 1-maniplex is associated to a 2-polytope or $l$-gon, which graph contains $2l$ vertices joined by a perfect matching of colours 0 and 1, and each of size $l$. A 2-maniplex can be considered as a map and vice versa, so that maniplexes generalise the notion of maps to higher rank. Regarding polytopes, the flag graph of any $(n+1)$-polytope can be associated to an $n$-maniplex, generalising in such way the notion of polytopes.

One can think of the sequence $(r_0, r_1, \dots, r_n)$ of a maniplex $\ma$ as permutations of the flags. In fact, if $\Phi, \Psi \in \fl$ are flags of $\ma$ belonging to the same part of the partition induced by $r_i$, for some $i$, we say that $\Phi^{r_i} = \Psi$ and $\Psi^{r_i} = \Phi$. In this way each $r_i$ acts as a involutory permutation of $\fl$.
In analogy with polytopes, we let 
$K = \{ w \in \langle r_0, \dots, r_n \rangle \mid \Phi^w = \Phi, \ \mathrm{for} \ \mathrm{all} \ \Phi \in \fl \}$ and define  the {\em connection group} $\Mon(\ma)$ of $\ma$ as the quotient of $\langle r_0, \dots, r_n \rangle$ over $K$. 
As before, we abuse notation and say that $\Mon(\ma)$ is generated by $r_0, \dots, r_n$ and define the action of $\Mon(\ma)$ on the flags inductively, induced by the action of the sequence $(r_0, r_1, \dots, r_n)$. In this way, the action of $\Mon(\ma)$ on $\fl$ is faithful and transitive.

Note further that since the sequence $(r_0, r_1, \dots, r_n)$ induces a string Coxeter group, then, as elements of $\Mon(\ma)$, $r_ir_j=r_jr_i$ whenever $|i-j|\geq 2$. This implies that given a flag $\Phi$ of $\po$ and $i, j \in \{0, \dots, n\}$ such that $|i-j|\geq 2$, we have that $\Phi^{i,j}=\Phi^{r_ir_j}=\Phi^{r_jr_i}=\Phi^{j,i}$. 

An {\em automorphism} $\alpha$ of an $n$-maniplex is a colour-preserving automorphism of the graph $\mathcal{G}$.
In a similar way as it happens for polytopes, the connectivity of the graph $\mathcal G$ implies that the action of the automorphism group $\Aut(\po$) of $\po$ is free on the vertices of $\mathcal G$.
Hence, $\alpha$ can be seen as a permutation of the flags in $\fl$ that commutes with each of the permutations in the connection group.

To have consistent concepts and notation between polytopes and maniplexes, we shall say that an $i$-face (or a face of rank $i$) of a maniplex is a connected component of the subgraph of $\mathcal G$ obtained by removing the $i$-edges of $\mathcal G$. Furthermore, we say that two flags $\Phi$ and $\Psi$ are $i$-adjacent if $\Phi^{r_i}= \Psi$ (note that since $r_i$ is an involution, $\Phi^{r_i}= \Psi$ implies that $\Psi^{r_i}= \Phi$, so the concept is symmetric).

To each $i$-face $F$ of $\po$, we can associate an $(i-1)$-maniplex $\po_F$ by identifying two flags of $F$
whenever there is a $j$-edge between them, with $j > i$. Equivalently, we can remove from $F$ all edges of colours $\{i+1, \ldots,
n-1 \}$, and then take one of the connected components. In fact, since $\langle r_0, \ldots, r_{i-1} \rangle$
commutes with $\langle r_{i+1}, \ldots, r_n \rangle$, the connected components of this subgraph of $F$ are
all isomorphic, so it does not matter which one we pick.

If $\Phi$ is a flag of $\po$ that contains the $i$-face $F$, then it naturally induces a flag $\overline{\Phi}$ in $\po_F$.
Similarly, if $\varphi \in \Aut(\po)$ fixes $F$, then $\varphi$ induces an automorphism $\overline{\varphi} \in \Aut(\po_F)$,
defined by $\overline{\Phi} \overline{\varphi} = \overline{\Phi \varphi}$. To check that this is well-defined,
suppose that $\overline{\Phi} = \overline{\Psi}$; we want to show that $\overline{\Phi \varphi} = \overline{\Psi \varphi}$.
Since $\overline{\Phi} = \overline{\Psi}$, it follows that $\Psi = \Phi^w$ for some $w \in \langle r_{i+1}, \ldots, r_n \rangle$.
Then $\Psi \varphi = (\Phi^w) \varphi = (\Phi \varphi)^w$, so that $\overline{\Psi \varphi} = \overline{\Phi \varphi}$.

By definition, the edges of $\mathcal G$ of one given colour form a perfect matching. The 2-factors of the graph $\mathcal G$ are the subgraphs spanned by the edges of two different colours of edges. 

Since the automorphisms of $\po$ preserve the adjacencies between the flags, it is not difficult to see that the following lemma holds. 

\begin{lemma}
\label{orbitTOorbit}
Let $\Phi$ be a flag of $\po$ and let $a \in \Mon(\po)$. If ${\mathcal O}_1$ and ${\mathcal O}_2$ denote the flag orbits of $\Phi$ and $\Phi^a$ (under $\Aut(\po)$), respectively, then $\Psi \in {\mathcal O}_1$  if and only if $\Psi^a \in {\mathcal O}_2$.
\end{lemma}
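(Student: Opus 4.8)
The plan is to reduce the statement to a single fact, already recorded just before the lemma, namely that every automorphism of $\po$ commutes, as a permutation of the flags, with every element of the connection group $\Mon(\po)$. This commuting property is the crux of the argument. It holds because an automorphism $\varphi \in \Aut(\po)$ is by definition a colour-preserving automorphism of the graph $\mathcal{G}$, and hence carries $i$-adjacent flags to $i$-adjacent flags. Consequently $(\Phi^{r_i})\varphi = (\Phi\varphi)^{r_i}$ for every flag $\Phi$ and every $i$; writing an arbitrary $a \in \Mon(\po)$ as a word in the generators $r_i$ and applying this relation repeatedly along the word, I would obtain $(\Phi^a)\varphi = (\Phi\varphi)^a$ for all flags $\Phi \in \fl$.

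Granting this, the two implications follow directly. For the forward direction I would suppose $\Psi \in {\mathcal O}_1$, so that $\Psi = \Phi\varphi$ for some $\varphi \in \Aut(\po)$, since ${\mathcal O}_1$ is precisely the $\Aut(\po)$-orbit of $\Phi$. Applying $a$ and invoking the commuting relation gives $\Psi^a = (\Phi\varphi)^a = (\Phi^a)\varphi$, which lies in the orbit ${\mathcal O}_2$ of $\Phi^a$; hence $\Psi^a \in {\mathcal O}_2$.

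For the converse I would suppose $\Psi^a \in {\mathcal O}_2$, so that $\Psi^a = (\Phi^a)\psi$ for some $\psi \in \Aut(\po)$. Using the commuting relation again, $(\Phi^a)\psi = (\Phi\psi)^a$, and therefore $\Psi^a = (\Phi\psi)^a$. At this point I would use that $a$ acts as a bijection of $\fl$: the action of $\Mon(\po)$ on the flags is faithful and transitive, so each element is an invertible permutation. Cancelling $a$ then yields $\Psi = \Phi\psi \in {\mathcal O}_1$, completing the equivalence.

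I do not expect any genuine obstacle here; the argument is essentially immediate once the commuting relation is stated cleanly. The only two points deserving care are the cancellation of $a$ in the converse, which relies on $a$ being an honest permutation rather than merely a map, and the bookkeeping of the left/right action conventions, so that the automorphism $\varphi$ and the monodromy element $a$ are applied on the correct sides throughout.
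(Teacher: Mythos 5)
Your proof is correct and follows essentially the same route as the paper, which states the lemma as an immediate consequence of the fact (recorded just before it) that each automorphism, viewed as a permutation of the flags, commutes with every element of $\Mon(\po)$. Your write-up simply makes explicit what the paper leaves to the reader: the derivation of the commuting relation $(\Phi^a)\varphi = (\Phi\varphi)^a$ from colour-preservation, and the cancellation of the bijection $a$ in the converse direction.
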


We say that a maniplex $\po$ is {\em $i$-face-transitive} if $\Aut(\po)$ is transitive on the faces of rank $i$. We say that $\po$ is {\em fully-transitive} if it is $i$-face-transitive for every $i =0, \dots, n-1$.

If $\Aut(\po)$ has $k$ orbits on the flags of $\po$, we say that $\po$ is a $k$-orbit maniplex. A 1-orbit maniplex is also called a {\em reflexible} maniplex. A 2-orbit maniplex with adjacent flags belonging to different orbits is a {\em chiral} maniplex. If a maniplex has at most 2 orbits of flags and $\gr$ is a bipartite graph, then the maniplex is said to be {\em rotary}.

\section{Symmetry type graphs of polytopes and maniplexes}
\label{sec:stg}

In this section we shall define the symmetry type graph of a polytope or a maniplex.  
To this end, we shall make use of quotient of graphs. 
Therefore, we now consider  pregraphs; that is, graphs that allow multiple edges and semi-edges. 
 As it should be clear, it makes no difference whether we consider an abstract $n$-polytope or an $(n-1)$-maniplex. Hence, though we will consider maniplexes throughout the paper, similar results will apply to polytopes.

Given an edge-coloured graph $\mathcal G$, and a partition $\mathcal B$ of its vertex set $V$,  the {\em coloured quotient with respect to $\mathcal B$},  $\mathcal G_{\mathcal B}$, is defined as the pregraph with vertex set $\mathcal B$, such that for any two vertices $B,C \in {\mathcal B}$, there is a dart of colour $a$ from $B$ to $C$ if and only if there exists $u \in B$ and $v \in C$ such that there is a dart of colour $a$ from $u$ to $v$. 
Edges between vertices in the same part of the partition $\mathcal B$ quotient into semi-edges.

Throughout the remainder of this section, let $\po$ be an  $(n-1)$-maniplex and $\gr$ its coloured flag graph. 

As we discussed in the previous section, $\Aut(\po)$ acts semiregularly on the vertices of $\gr$. We shall consider the orbits of the vertices of $\gr$ under the action of $\Aut(\po)$ as our partition ${\mathcal B}$, and denote ${\mathcal B:=O}rb$. 
Note that since the action is semiregular, every two orbits $B,C \in {\mathcal O}rb$ have the same number of elements. 
The {\em symmetry type graph} $T(\po)$ of $\po$ is the coloured quotient graph of $\gr$ with respect to ${\mathcal O}rb$. 

Since the flag graph $\gr$ is an undirected graph, then $T(\po)$ is a pre-graph without loops or directed edges. 
Furthermore, as we are taking the coloured quotient, and $\gr$ is edge-coloured with $n$ colours, then $T(\po)$ is an $n$-valent pre-graph, with one edge or semi-edge of each colour at each vertex. 
It is hence not difficult to see that if $\po$ is a reflexible maniplex, then $T(\po)$ is a graph consisting of only one vertex and $n$ semi-edges, all of them of different colours. 
In fact, the symmetry type graph of a $k$-orbit maniplex has precisely $k$ vertices. 
Figure~\ref{STGrank3A} shows the symmetry type graph of a reflexible 2-maniplex (on the left), and the symmetry type graph of the cuboctahedron: the quotient graph of the flag graph in Figure~\ref{fig:baricentic} with respect to the automorphism group of the cubocahedron. 

\begin{figure}[htbp]
\begin{center}
\includegraphics[width=10cm]{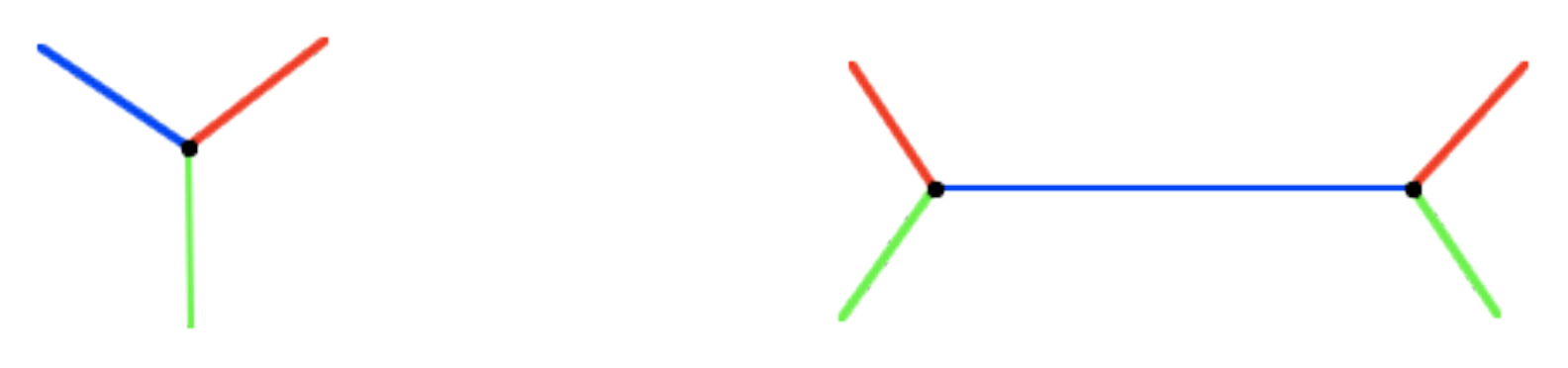}
\caption{Symmetry type graphs of a reflexible 2-maniplex (on the left) and of the cuboctahedron (on the right).}
\label{STGrank3A}
\end{center}
\end{figure}

Note that by the definition of $T(\po)$, there exists a surjective function $$\psi: V(\gr) \to V(T(\po))$$ that assigns, to each vertex  of $V(\gr)$ its corresponding orbit in $T(\po)$. Hence, given $\Phi, \Psi \in V(\gr)$, we have that $\psi(\Phi)=\psi(\Psi)$ if and only if $\Phi$ and $\Psi$ are in the same orbit under $\Aut(\po)$.

Given vertices $u,v$ of $T(\po)$, if there is an $i$-edge joining them, we shall denote such edge as $(u,v)_i$. Similarly, $(v,v)_i$ shall denote the semi-edge of colour $i$ incident to the vertex $v$. 

Because of Lemma~\ref{orbitTOorbit}, we can define the action of $\Mon(\po)$ on the vertices of $T(\po)$. In fact, given $v \in T(\po)$ and $a \in \Mon(\po)$, then $v^a:=\psi(\Phi^a)$, where $\Phi\in \psi^{-1}(v)$. Note that the definition of the action does not depend on the choice of  $\Phi\in \psi^{-1}(v)$; in fact, we have that $\Phi, \Psi, \in \psi^{-1}(v)$ if and only if $\psi(\Phi)=\psi(\Psi)$ and this in turn is true if and only if $\Phi$ and $\Psi$ are in the same orbit under $\Aut(\po)$. By Lemma~\ref{orbitTOorbit}, the fact that $\Phi$ and $\Psi$ are in the same orbit under $\Aut(\po)$ implies that, for any $a \in \Mon(\po)$, the flags $\Phi^a$ and $\Psi^a$ are also in the same orbit under $\Aut(\po)$. Hence $\psi(\Phi^a)=\psi(\Psi^a)$ and therefore the definition of $v^a$ does not depend on the choice of the element  $\Phi\in \psi^{-1}(v)$.

Since $\Mon(\po)$ is transitive on the vertices of $\gr$, then it is also transitive on the vertices of $T(\po)$, implying that $T(\po)$ is a connected graph. 
Furthermore, the action of each generator $r_i$ of $\Mon(\po)$ on a vertex $v$ of $T(\po)$ corresponds precisely to the (semi-)edge of colour $i$ incident to $v$. 
Hence, the orbit $v^{ \langle r_j \mid j \neq i \rangle}$ corresponds to the orbit under $\Aut(\po)$ of an $i$-face $F$ of $\po$ such that $F \in \Psi$, for some $\Psi \in \psi^{-1}(v)$ (as before, different choices of flag $\Psi\in \psi^{-1}(v)$ induce the same orbit of $i$-faces). Therefore, the  connected components of the subgraph $T^i(\po)$ of $T(\po)$ with edges of colours $\{0, \dots, n-1\} \setminus \{i\}$ correspond to the orbits of the $i$-faces under $\Aut(\po)$. In particular this implies the following proposition.

\begin{proposition}
Let $\po$ be a maniplex, $T(\po)$ its symmetry type graph and let $T^i(\po)$ be the subgraph resulting by erasing the $i$-edges of $T(\po)$. Then $\po$ is $i$-face-transitive if and only if $T^i(\po)$ is connected.
\end{proposition}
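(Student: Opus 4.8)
The plan is to unpack the definitions established in the paragraph immediately preceding the proposition and verify that the stated equivalence is essentially a reformulation of what has already been shown. Recall that $\po$ is $i$-face-transitive precisely when $\Aut(\po)$ acts transitively on the $i$-faces of $\po$. The key structural fact assembled in the excerpt is that the connected components of $T^i(\po)$ correspond bijectively to the orbits of the $i$-faces of $\po$ under $\Aut(\po)$. Granting this correspondence, the proposition follows immediately: there is a single orbit of $i$-faces (i.e.\ transitivity) if and only if $T^i(\po)$ has exactly one connected component (i.e.\ is connected).

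First I would make the correspondence explicit rather than citing it in passing. Recall that an $i$-face $F$ of $\po$ is a connected component of the subgraph of $\gr$ obtained by deleting the $i$-edges, equivalently an orbit of flags under $H_i = \langle r_j \mid j \neq i \rangle$. Under the quotient map $\psi \colon V(\gr) \to V(T(\po))$, I would track how these components map down. For a vertex $v$ of $T(\po)$, the orbit $v^{\langle r_j \mid j \neq i\rangle}$ is, as noted before the proposition, exactly the $\Aut(\po)$-orbit of an $i$-face containing some flag of $\psi^{-1}(v)$. The vertices reachable from $v$ using only edges of colours in $\{0,\dots,n-1\}\setminus\{i\}$ are precisely those in this orbit under $H_i$, so they constitute one connected component of $T^i(\po)$. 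Thus each connected component of $T^i(\po)$ is the $\psi$-image of the flags lying in a single $\Aut(\po)$-orbit of $i$-faces, and distinct orbits of $i$-faces yield distinct components.

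The heart of the argument is to check that this assignment, from connected components of $T^i(\po)$ to $\Aut(\po)$-orbits of $i$-faces, is a well-defined bijection. The main point requiring care is well-definedness and injectivity: two flags $\Phi,\Psi$ map to the same vertex of $T(\po)$ iff they lie in the same $\Aut(\po)$-orbit (Lemma~\ref{orbitTOorbit} guarantees this is compatible with the $\Mon(\po)$-action so that the action on $V(T(\po))$ is well-defined), and so the $H_i$-orbit structure downstairs faithfully records the $\Aut(\po)$-orbit structure of $i$-faces. Concretely, two $i$-faces are in the same $\Aut(\po)$-orbit iff flags containing them map, under $\psi$, into the same connected component of $T^i(\po)$; I would spell out both directions of this equivalence using the semiregularity of the $\Aut(\po)$-action and the transitivity of $\Mon(\po)$ on $V(\gr)$.

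With the bijection in hand, the conclusion is formal. The maniplex $\po$ is $i$-face-transitive iff there is exactly one $\Aut(\po)$-orbit of $i$-faces iff, by the bijection, $T^i(\po)$ has exactly one connected component iff $T^i(\po)$ is connected. I expect the only genuine obstacle to be bookkeeping: ensuring that deleting the $i$-edges of $T(\po)$ really does mirror deleting the $i$-edges of $\gr$ under the quotient, i.e.\ that $\psi$ carries components to components without merging two $\Aut(\po)$-orbits of $i$-faces into one component or splitting one orbit across several components. This is handled by the compatibility of $\psi$ with the $\Mon(\po)$-action, which is exactly the content established before the proposition and underwritten by Lemma~\ref{orbitTOorbit}.
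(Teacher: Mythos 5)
Your proposal is correct and follows essentially the same route as the paper: the paper's justification is precisely the observation, established in the paragraph preceding the proposition, that the connected components of $T^i(\po)$ correspond to the $\Aut(\po)$-orbits of the $i$-faces (via the quotient map $\psi$ and the compatibility of the $\Mon(\po)$-action guaranteed by Lemma~\ref{orbitTOorbit}), from which the equivalence is immediate. Your write-up simply makes this correspondence and its well-definedness/injectivity explicit, which is a faithful expansion of the paper's argument rather than a different one.
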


We shall say that a symmetry type graph $T$ is $i$-face-transitive if $T^i$ is connected, and that $T$ is a fully-transitive symmetry type graph if it is $i$-face-transitive for all $i$.

Recall that to each $i$-face $F$ of $\po$, there is an associated $(i-1)$-maniplex $\po_F$. 
The symmetry type graph $T(\po_F)$ is related in a natural way to the connected component of $T^i(\po)$ that corresponds to $F$:

\begin{proposition}
\label{STGofFaces}
Let $F$ be an $i$-face of the maniplex $\po$, and let $\po_F$ be the corresponding $(i-1)$-maniplex. Let
$\mathcal C$ be the connected component of $T^i(\po)$ corresponding to $F$. Then there is a surjective
function $\pi: V(\mathcal C) \to V(T(\po_F))$. Furthermore, if $j < i$ then each $j$-edge
$(u, u^j)_j$ of $\mathcal C$ yields a $j$-edge $(\pi(u), \pi(u^j))_j$ in $T(\po_F)$, and if $j > i$, then $\pi(u) = \pi(u^j)$.
\end{proposition}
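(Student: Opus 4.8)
The plan is to build $\pi$ out of two quotient maps joined in a commutative square, letting the flags of $\po$ that contain $F$ mediate between $\mathcal C$ and $T(\po_F)$. Write $\fl_F$ for the set of flags of $\po$ containing $F$; equivalently $\fl_F$ is the vertex set of the subgraph $F$ of $\gr$. Using the description of $\po_F$ by identifications, let $q\colon \fl_F \to \fl(\po_F)$ be the map sending a flag to the class obtained by identifying flags joined by a $j$-edge with $j>i$, so that the fibre of $q$ through $\Phi$ is the orbit of $\Phi$ under the subgroup generated by those $r_j$ with $j>i$. Let $\psi\colon V(\gr)\to V(T(\po))$ and $\psi_F\colon \fl(\po_F)\to V(T(\po_F))$ be the orbit maps. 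Every flag in $\fl_F$ contains $F$, so its $\psi$-image lies in the component of $T^i(\po)$ corresponding to the orbit of $F$, i.e.\ in $\mathcal C$; conversely any vertex of $\mathcal C$ is $\psi(\Psi)$ for a flag $\Psi$ whose $i$-face is an $\Aut(\po)$-translate of $F$, and replacing $\Psi$ by a suitable translate we may take $\Psi\in\fl_F$. Hence $\psi|_{\fl_F}\colon \fl_F\to V(\mathcal C)$ is onto.

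First I would define $\pi$ by $\pi(\psi(\Phi))=\psi_F(q(\Phi))$ for $\Phi\in\fl_F$, that is, so that $\pi\circ(\psi|_{\fl_F})=\psi_F\circ q$; surjectivity of $\pi$ is then immediate because the right-hand side is a composite of surjections. The real content is well-definedness. Suppose $\Phi,\Psi\in\fl_F$ with $\psi(\Phi)=\psi(\Psi)$, say $\Psi=\Phi^\gamma$ with $\gamma\in\Aut(\po)$. Since the $i$-face of $\Phi$ is $F$ and $\gamma$ carries the $i$-face of $\Phi$ to that of $\Psi$, which is again $F$, we get $\gamma(F)=F$. By the construction recalled in Section~\ref{sec:PolyMani}, a $\gamma\in\Aut(\po)$ fixing $F$ induces $\overline\gamma\in\Aut(\po_F)$ with $q(\Phi^\gamma)=q(\Phi)^{\overline\gamma}$; thus $q(\Psi)$ and $q(\Phi)$ lie in a common $\Aut(\po_F)$-orbit and $\psi_F(q(\Phi))=\psi_F(q(\Psi))$, as needed. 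I expect this step to be the main obstacle, as it is where the hypothesis that $\Phi$ and $\Psi$ both contain $F$ must be upgraded to an automorphism that genuinely stabilizes $F$ and hence descends to $\po_F$.

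Finally I would read the two edge statements off the same square, using that for $j\neq i$ the $r_j$-matching never leaves $F$, so $\Phi\in\fl_F$ forces $\Phi^{r_j}\in\fl_F$. Given a $j$-edge $(u,u^j)_j$ of $\mathcal C$, pick $\Phi\in\fl_F$ with $\psi(\Phi)=u$; then $u^j=\psi(\Phi^{r_j})$, so $\pi(u)=\psi_F(q(\Phi))$ and $\pi(u^j)=\psi_F(q(\Phi^{r_j}))$. If $j<i$, then $r_j$ is one of the generators of the connection group of $\po_F$ and descends through $q$, so $q(\Phi)$ and $q(\Phi^{r_j})$ are joined by a $j$-edge in the flag graph of $\po_F$; passing to the quotient $\psi_F$ yields the $j$-edge $(\pi(u),\pi(u^j))_j$ of $T(\po_F)$ (a semi-edge when $\pi(u)=\pi(u^j)$). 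If $j>i$, then $r_j$ lies in the subgroup generated by the $r_k$ with $k>i$, so $\Phi$ and $\Phi^{r_j}$ share a $q$-fibre, giving $q(\Phi)=q(\Phi^{r_j})$ and hence $\pi(u)=\pi(u^j)$. Apart from this well-definedness point, the argument is formal diagram-chasing together with the complementary roles played by the colours below and above $i$ under $q$.
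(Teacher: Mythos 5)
Your proof is correct and follows essentially the same route as the paper's: flags containing $F$ mediate between $\mathcal C$ and $T(\po_F)$, well-definedness comes from the induced automorphism $\overline{\gamma}\in\Aut(\po_F)$ of a $\gamma\in\Aut(\po)$ stabilizing $F$, and the edge statements follow because $r_j$ descends to $\po_F$ for $j<i$ and collapses under the identification for $j>i$. The only difference is that you spell out two details the paper leaves implicit — that every vertex of $\mathcal C$ is hit by a flag actually lying in $F$ (via translating by an automorphism), and that $\gamma$ genuinely fixes $F$ so that $\overline{\gamma}$ exists — which is a welcome tightening but not a different argument.
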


\begin{proof}
First, let $\Phi$ and $\Psi$ be flags of $\po$ that are both in the connected component $F$, and suppose that
they lie in the same flag orbit, so that $\Psi = \Phi \varphi$ for some $\varphi \in \Aut(\po)$. 
Then the induced automorphism $\overline{\varphi}$ of $\po_F$ sends $\overline{\Phi}$ to $\overline{\Psi}$, and
therefore $\overline{\Phi}$ and $\overline{\Psi}$ lie in the same orbit. Furthermore, every flag of
$\po_F$ is of the form $\overline{\Phi}$ for some $\Phi$ in $F$. Thus, each orbit of $\po$ that
intersects $F$ induces an orbit of $\po_F$, and it follows that there is a surjective function
$\pi: V(\mathcal C) \to V(T(\po_F))$. 

Consider an edge $(u, u^j)_j$ in $\mathcal C$. Then $u = \psi(\Phi)$ for some flag $\Phi$ in $F$, and we can
take $u^j = \psi(\Phi^j)$. Both $\Phi$ and $\Phi^j$ induce flags in $\po_F$. If $j < i$, then
$\overline{\Phi^j} = \overline{\Phi}^j$. Therefore, there must be a $j$-edge from the orbit of
$\overline{\Phi}$ to the orbit of $\overline{\Phi^j}$; in other words, a $j$-edge from $\pi(u)$ to $\pi(u^j)$. 
On the other hand, if $j > i$, then $\overline{\Phi^j} = \overline{\Phi}$, and so $\overline{\Phi}$ and
$\overline{\Phi^j}$ lie in the same orbit and thus $\pi(u) = \pi(v)$.
\end{proof}

Note that the edges of a given colour $i$ of $T(\po)$ form a perfect matching (where, of course, we are allowing to match a vertex with itself by a semi-edge). Given two colours $i$ and $j$, the subgraph of $T(\po)$ consisting of all the vertices of $T(\po)$ and only the $i$- and $j$-edges shall be called a $(i,j)$ 2-factor of $T(\po)$.
Because $r_ir_j=r_jr_i$ whenever $| i-j |\geq 2$, in $\gr$, the alternating cycles of colours $i$ and $j$ have length 4. By Lemma~\ref{orbitTOorbit} each of these 4-cycles should then factor, in $T(\po)$, into one of the five graphs in Figure~\ref{4cyclequotient}. Hence, if $| i-j |\geq 2$, then the connected components of the $(i,j)$ 2-factors of $T(\po)$ are precisely one of these graphs. 

\begin{figure}[htbp]
\begin{center}
\includegraphics[width=8cm]{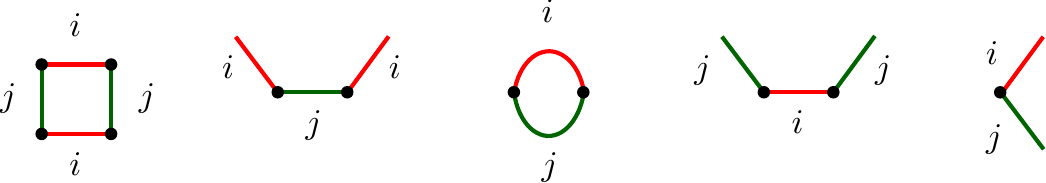}
\caption{Possible quotients of $i-j$ coloured 4-cycles.}
\label{4cyclequotient}
\end{center}
\end{figure}

In light of the above observations we state the following lemma.

\begin{lemma}
\label{2factors4vertices}
Let $T(\po)$ be the symmetry type graph of a maniplex. If there are vertices $u,v,w \in V(T(\po))$ such that $(u,v)_i, (v,w)_j \in E(T(\po))$ with $| i - j | \geq 2$, then the connected component of the $(i,j)$ 2-factor that contains $v$ has four vertices.
\end{lemma}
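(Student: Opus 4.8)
The plan is to leverage the structural dictionary already established just before the statement: namely, that when $|i-j| \geq 2$, the alternating $i$--$j$ cycles of length four in $\gr$ quotient, in $T(\po)$, into exactly one of the five graphs depicted in Figure~\ref{4cyclequotient}. So the entire proof reduces to identifying which of these five possible $(i,j)$ 2-factor components can contain a vertex $v$ that is the meeting point of an $i$-edge $(u,v)_i$ and a $j$-edge $(v,w)_j$, and then observing that the only admissible one has four vertices. First I would recall that each vertex of $T(\po)$ is incident to exactly one (semi-)edge of each colour; in particular $v$ has a unique $i$-edge and a unique $j$-edge, which by hypothesis are the genuine edges $(u,v)_i$ and $(v,w)_j$ (not semi-edges).

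Next I would examine the five quotient graphs case by case. The key distinction is how many of the four colour-$i$/colour-$j$ edges of the original 4-cycle survive as true edges versus collapse into semi-edges. The smallest quotients — those on one or two vertices — arise precisely when some of the edges of the 4-cycle become semi-edges, i.e. when at $v$ either the $i$-edge or the $j$-edge is a loop/semi-edge. Since we have assumed both the $i$-edge and the $j$-edge at $v$ are honest edges to distinct neighbours $u \neq v$ and $w \neq v$, all the small-vertex quotients are excluded: they would force at least one of these incidences to be a semi-edge at $v$. The hard part, and the one requiring genuine care, is ruling out the remaining intermediate quotient on fewer than four vertices (the one where the 4-cycle folds onto a two-vertex graph with a double edge or a two-vertex graph with two semi-edges), confirming that having two distinct honest edges of colours $i$ and $j$ emanating from $v$ is incompatible with any folding that identifies the two $v$-antipodal vertices of the 4-cycle.

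To make this precise I would lift back to $\gr$. Choose a flag $\Phi$ with $\psi(\Phi) = v$. Then $\Phi^i$ lies in the orbit $u$ and $\Phi^j$ lies in the orbit $w$, and the fourth flag of the 4-cycle is $\Phi^{i,j} = \Phi^{j,i}$ (using that $r_i r_j = r_j r_i$ when $|i-j|\geq 2$), whose orbit I will call $x$. The four orbits involved are thus $v = \psi(\Phi)$, $u = \psi(\Phi^i)$, $w = \psi(\Phi^j)$ and $x = \psi(\Phi^{i,j})$. The claim that the component has four vertices amounts to showing $v, u, w, x$ are pairwise distinct. By hypothesis $u \neq v$ (the $i$-edge is honest) and $w \neq v$ (the $j$-edge is honest). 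The remaining inequalities I would obtain from Lemma~\ref{orbitTOorbit}: since $u \neq v$, applying $r_j$ and the orbit-to-orbit correspondence gives $\psi(\Phi^{i,j}) = x \neq \psi(\Phi^j) = w$, i.e. $x \neq w$; symmetrically, from $w \neq v$, applying $r_i$ gives $x \neq u$. Finally, to see $u \neq w$ and $v \neq x$ I would again apply Lemma~\ref{orbitTOorbit}, noting that $u = w$ would force $\psi(\Phi^i) = \psi(\Phi^j)$, and then $r_i$-translating would force $v = x$, collapsing the cycle to two vertices and, on applying $r_j$ once more, making the $j$-edge at $v$ a semi-edge — contradicting the hypothesis that $(v,w)_j$ is a genuine edge with $w \neq v$.

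Thus the four orbits $v,u,w,x$ are pairwise distinct, the $(i,j)$ 2-factor component containing $v$ is the image of a single $4$-cycle of $\gr$ under $\psi$ with no identifications, and hence it has exactly four vertices. I expect the main obstacle to be the bookkeeping in the third paragraph: correctly deducing each of the six pairwise-distinctness statements from the single structural input of Lemma~\ref{orbitTOorbit} together with the honesty of the two edges at $v$, while being careful that the commutativity $r_i r_j = r_j r_i$ (valid exactly because $|i-j|\geq 2$) is what guarantees the relevant cycle closes up into length four in the first place. Everything else is a direct appeal to the already-established classification in Figure~\ref{4cyclequotient}.
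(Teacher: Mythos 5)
Your overall route coincides with the paper's intent: the paper gives no separate proof of this lemma, treating it as immediate from the observation that, when $|i-j|\geq 2$, each alternating $i$--$j$ $4$-cycle of $\gr$ quotients onto one of the five graphs of Figure~\ref{4cyclequotient}. Your lifting argument makes this precise, and most of it is sound: writing $v=\psi(\Phi)$, $u=\psi(\Phi^i)$, $w=\psi(\Phi^j)$, $x=\psi(\Phi^{i,j})$, your deductions via Lemma~\ref{orbitTOorbit} that $u\neq v$ implies $x\neq w$, and that $w\neq v$ implies $x\neq u$, are both correct.

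The gap is in your final step, where you claim that $u\neq w$ (and hence $v\neq x$) can be \emph{derived} from the honesty of the two edges at $v$. It cannot. From $u=w$ you correctly get $v=x$, but ``applying $r_j$ once more'' does not make the $j$-edge at $v$ a semi-edge: translating the relation $\psi(\Phi)=\psi(\Phi^{i,j})$ by $r_j$ via Lemma~\ref{orbitTOorbit} only returns $\psi(\Phi^i)=\psi(\Phi^j)$, i.e.\ $u=w$ again; it never yields $\psi(\Phi)=\psi(\Phi^j)$. Indeed no contradiction is available, because the configuration you are trying to exclude actually occurs: for any maniplex in class $2_\emptyset$ (e.g.\ any chiral maniplex), the $(i,j)$ $2$-factor of $T(\po)$ with $|i-j|\geq 2$ is a double edge on two vertices, so $v$ has honest $i$- and $j$-edges to the \emph{same} neighbour $u=w\neq v$, and the component has two vertices, not four. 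This shows the lemma must be read with $u,v,w$ pairwise distinct --- which is exactly how the paper applies it (in the proof of Proposition~\ref{stg_3-orbit} the vertices $v_1,v_2,v_3$ are the three distinct vertices of $T(\ma)$, and similarly throughout Section~\ref{sec:4notfully}). Once $u\neq w$ is taken as a hypothesis rather than a goal, your argument closes correctly: $x\neq u$ and $x\neq w$ as you showed, and $v=x$ would force $u=w$ by the same translation, so $v\neq x$; the four orbits are then pairwise distinct and the component is the full $4$-cycle.
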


\section{Symmetry type graphs of highly symmetric maniplexes}
\label{sec:stg-highly}

One can classify maniplexes with small number of flag orbits (under the action of the automorphism group of the maniplex) in terms of their symmetry type graphs. The number of distinct possible symmetry types of a $k$-orbit $(n-1)$-maniplex is the number of connected pre-graphs on $k$ vertices that are $n$-valent and that can be edge-coloured with exactly $n$ colours. Furthermore, given a symmetry type graph, one can read from the appropriate coloured subgraphs the different types of face transitivities that the maniplex has.

As pointed out before, the symmetry type graph of a reflexible $(n-1)$-maniplex consists of one vertex and $n$ semi-edges. The classification of two-orbit maniplexes (see \cite{2-orbit}) in terms of the local configuration of their flags follows immediately from considering symmetry type graphs. In fact, for each $n$, there are $2^{n}-1$ symmetry type graphs with 2 vertices and $n$ (semi)-edges, since given any proper subset $I$ of the colours $\{0,1, \dots, n-1\}$, there is a symmetry type graph with two vertices, $| I |$ semi-edges corresponding to the colours of $I$, and where all the edges between the two vertices use the colours not in $I$ (see Figure~\ref{2orbitSTG}). 
This symmetry type graph corresponds precisely to polytopes in class $2_I$, see \cite{2-orbit}.

\begin{figure}[htbp]
\begin{center}
\includegraphics[width=8cm]{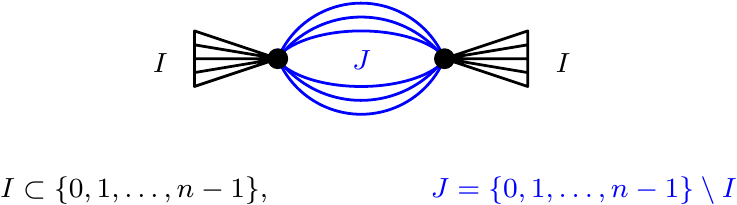}
\caption{The symmetry type graph of a maniplex in class $2_I$.}
\label{2orbitSTG}
\end{center}
\end{figure}

Highly symmetric maniplexes can be regarded as those with few flag orbits or those with many (or all) face transitivities. 
In \cite{medial} one can find the complete list of symmetry type graphs of $2$-maniplexes with at most 5 vertices. In this section we classify symmetry type graphs with 3 vertices and study some properties of symmetry type graphs of 4-orbit maniplexes and  fully-transitive 3-maniplexes.

\begin{proposition}\label{stg_3-orbit}
There are exactly $2n-3$ different possible symmetry type graphs of 3-orbit maniplexes of rank $n-1$.
\end{proposition}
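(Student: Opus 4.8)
The plan is to count, up to colour-preserving isomorphism, the graphs that can occur as $T(\po)$ for a $3$-orbit maniplex of rank $n-1$. Since a $k$-orbit maniplex has a symmetry type graph on $k$ vertices, these are exactly the connected, properly $n$-edge-coloured pregraphs on three vertices in which every vertex carries exactly one (semi-)edge of each colour and whose $(i,j)$ $2$-factors with $|i-j|\ge 2$ take one of the admissible forms. First I would record the purely local data: for a fixed colour $c$, the $c$-coloured (semi-)edges form a perfect matching of the three vertices, and on three vertices such a matching either is the \emph{trivial} one consisting of three semi-edges, or consists of one genuine edge on a pair of vertices together with one semi-edge on the remaining vertex. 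Thus each colour is described by a choice among four local types, and a $3$-orbit symmetry type graph is precisely an assignment of such a type to each colour $0,\dots,n-1$, subject to connectivity and to the maniplex condition.

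Next I would translate the maniplex condition into a usable combinatorial restriction. By Lemma~\ref{2factors4vertices} together with the list of admissible quotients in Figure~\ref{4cyclequotient}, for $|i-j|\ge 2$ every connected component of the $(i,j)$ $2$-factor has one or two vertices; on three vertices this simply forbids a component using all three vertices. A short check of the four-by-four table of local types shows that the only way a three-vertex component arises is when colours $i$ and $j$ both carry a genuine edge and those two edges lie on \emph{different} pairs of vertices. Hence the key restriction is: \textbf{if $|i-j|\ge 2$ and colours $i,j$ each have a genuine edge, then those edges lie on the same pair of vertices.}

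From here the structure is forced. Assigning to each colour that carries a genuine edge the pair of vertices supporting it gives a map $p$ from the set of ``edge-colours'' to the three vertex-pairs. The restriction above says that any two edge-colours sent to different pairs must be consecutive, and since no three distinct integers are pairwise at distance one, $p$ takes at most two values. Connectivity forbids $p$ from being constant (otherwise the vertex missing from the single supporting pair is isolated), so $p$ takes exactly two values; as two distinct pairs among three vertices share a vertex, the genuine edges form a path, and I may normalise the shared vertex to be the middle one, with the two pairs being the ``left'' and ``right'' edges of this path. Writing $S_L,S_R$ for the colours supporting left and right edges, the consecutivity condition forces $|a-b|=1$ for every $a\in S_L$ and $b\in S_R$; a direct interval argument then shows that $\{|S_L|,|S_R|\}$ is either $\{1,1\}$ (with the two colours consecutive) or $\{1,2\}$ (with the two-element side equal to $\{a,a+2\}$ and the one-element side $\{a+1\}$), and never $\{2,2\}$ or larger.

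Finally I would enumerate, remembering that exchanging the two ends of the path (the only nontrivial symmetry of the underlying shape) identifies isomorphic graphs. The $\{1,1\}$ case contributes one graph for each of the $n-1$ consecutive colour-pairs, and the $\{1,2\}$ case contributes one graph for each admissible $a\in\{0,\dots,n-3\}$, i.e.\ $n-2$ graphs; since the two families are visibly non-isomorphic (the second has a pair of vertices joined by a double edge) and the end-exchange fixes no configuration, the total is $(n-1)+(n-2)=2n-3$. The step deserving the most care is the second paragraph: reading Lemma~\ref{2factors4vertices} and Figure~\ref{4cyclequotient} correctly to extract the ``same pair'' condition, and then verifying that this single condition together with connectivity accounts for \emph{all} the constraints, so that the interval argument and the final count are genuinely exhaustive.
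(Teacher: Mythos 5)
Your proof is correct and follows essentially the same route as the paper's: both arguments hinge on Lemma~\ref{2factors4vertices} forcing genuine edges on distinct vertex-pairs to carry consecutive colours, and both then enumerate the same two families (the $n-1$ graphs of class $3^{j,j+1}$ and the $n-2$ graphs of class $3^{j}$), giving $2n-3$. The only difference is one of bookkeeping: your support-map and interval argument spells out carefully the step the paper merely asserts (``this implies that the only edges of $T(\po)$ are \dots''), which is a tightening of the same method rather than a different one.
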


\begin{proof}
Let $\ma$ be a 3-orbit $(n-1)$-maniplex and $T(\ma)$ its symmetry type graph. Then, $T(\ma)$ is an $n$-valent well edge-coloured graph with vertices $v_1, v_2$ and $ v_3$. Recall that the set of colours $\{0,1, \dots, n-1\}$ correspond to the distinguished generators $r_0, r_1, \dots, r_{n-1}$ of the connection group of $\ma$, and that by $(u,v)_i$ we mean the edge between vertices $u$ and $v$ of colour $i$.

Since $T(\ma)$ is a connected graph, without loss of generality, we can suppose that there is at least one edge joining $v_1$ and $v_2$ and another joining $v_2$ and $v_3$. 
Let $j,k \in \{0,1, \dots, n-1\}$ be the colours of these edges, respectively. That is, without loss of generality we may assume that $(v_1,v_2)_j$ and $(v_2,v_3)_k$ are edges of $T(\po)$. 
By Lemma~\ref{2factors4vertices}, we must have that $k = j \pm 1$, as otherwise $T(\po)$ would have to have at least 4 vertices.
This implies that the only edges of $T(\po)$ are either $(v_1,v_2)_j$ and $(v_2,v_3)_{j+1}$, $(v_1,v_2)_j$ and $(v_2,v_3)_{j-1}$ or $(v_1,v_2)_j$, $(v_2,v_3)_{j+1}$ and $(v_2,v_3)_{j-1}$, with $j \in \{1, 2, \dots, n-2\}$. (See Figure~\ref{i-transitive3}).

\begin{figure}[htbp]
\begin{center}
\includegraphics[width=12cm]{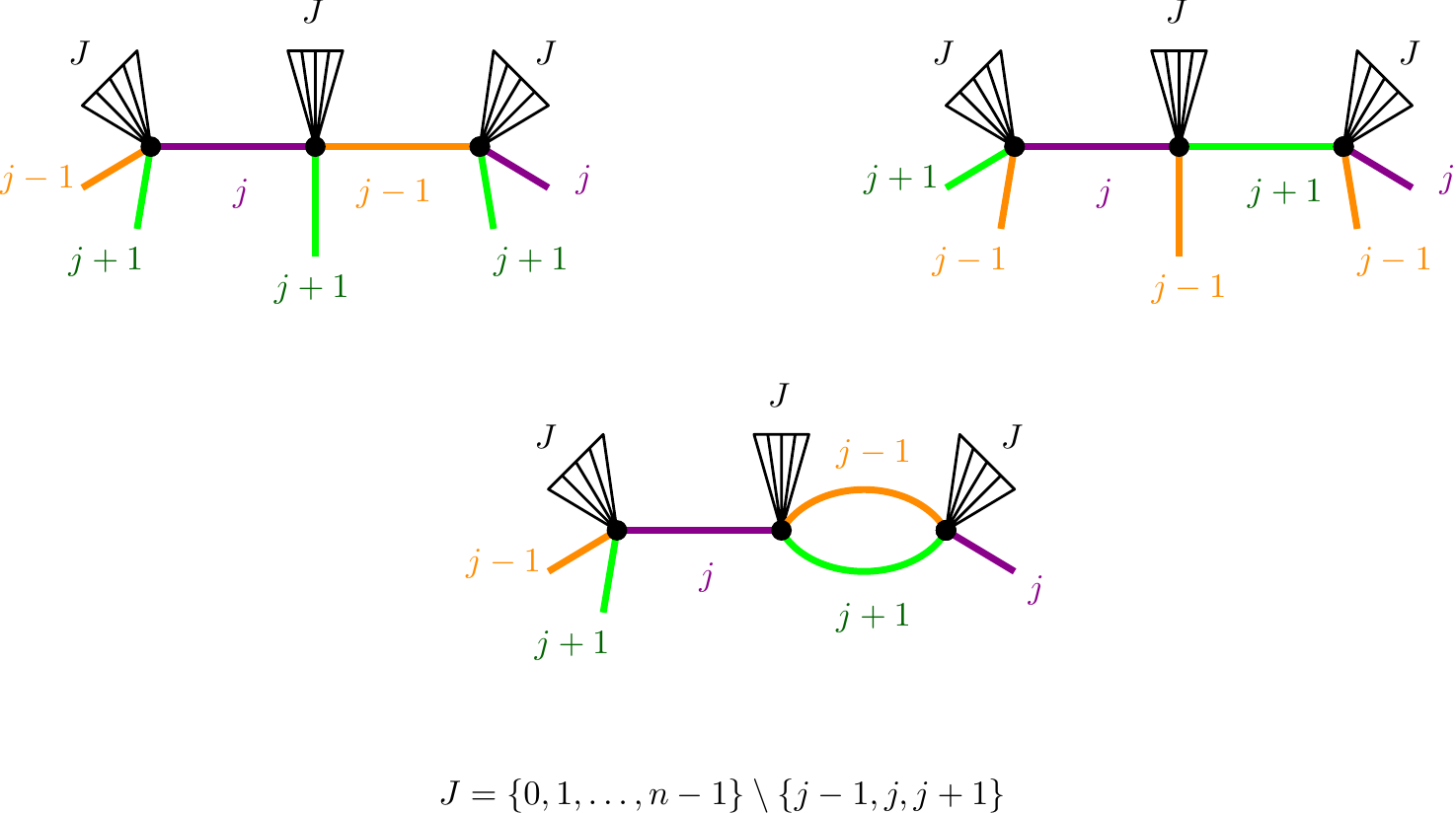}
\caption{Possible symmetry type graphs of 3-orbit $(n-1)$-maniplexes with edges of colours $j-1$, $j$, and $j+1$, with $j \in \{1, 2, \dots, n-2\}$.}
\label{i-transitive3}
\end{center}
\end{figure}
An easy computation now shows that there are $2n-3$ possible different symmetry type graphs of 3-orbit maniplexes of rank $n-1$.
\end{proof}

Given a 3-orbit $(n-1)$-maniplex $\po$ with symmetry type graph having exactly two edges $e$ and $e'$ of colours $j$ and $j+1$, respectively, for some $j \in \{0, \dots, n-2\}$, we shall say that $\po$ is in class $3^{j,j+1}$.
If, on the other hand, the symmetry type graph of $\po$ has one edge of colour $j$ and  parallel edges of colours $j-1$ and $j+1$, for some $j \in \{1, \dots, n-2\}$, then we say that $\po$ is in class $3^j$.
From Figure~\ref{i-transitive3} we observe that a maniplex in class $3^{j, j+1}$ is $i$-face-transitive whenever $i \neq j, j+1$, while a maniplex in class $3^j$ if $i$-face-transitive for every $i \neq j$.

\begin{proposition}
A $3$-orbit maniplex is $j$-face-transitive if and only if it does not belong to any of the classes $3^j$, $3^{j,j+1}$ or $3^{j-1,j}$.
\end{proposition}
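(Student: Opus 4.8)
The plan is to translate the claim into a statement about graph connectivity via the earlier proposition that $\po$ is $i$-face-transitive if and only if $T^i(\po)$ is connected, and then to verify it by inspecting the finitely many shapes of a $3$-orbit symmetry type graph classified in Proposition~\ref{stg_3-orbit}. Since $T^j(\po)$ is obtained from $T(\po)$ by deleting its $j$-coloured edges, the whole problem reduces to deciding, for each class and each colour $j$, whether this deletion disconnects the three-vertex graph $T(\po)$.

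First I would fix the vertices $v_1, v_2, v_3$ of $T(\po)$; by the proof of Proposition~\ref{stg_3-orbit} we may assume the edges joining distinct vertices form the path $v_1 - v_2 - v_3$, with a maniplex of class $3^{a,a+1}$ carrying exactly the connecting edges $(v_1,v_2)_a$ and $(v_2,v_3)_{a+1}$, and a maniplex of class $3^{a}$ carrying $(v_1,v_2)_a$ together with the parallel pair $(v_2,v_3)_{a-1}$ and $(v_2,v_3)_{a+1}$. The key structural remark is that, as each colour is a perfect matching of the three vertices, a single colour can occupy at most one connecting edge (two would need four endpoints); consequently the semi-edges are irrelevant to connectivity, and deleting the $j$-edges removes at most one edge joining distinct vertices of $T(\po)$.

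With this in hand I would run the case analysis for a fixed colour $j$. For a maniplex of class $3^{a,a+1}$ the two links of the path $v_1 - v_2 - v_3$ carry the distinct colours $a$ and $a+1$, so deleting colour $j$ breaks a link and isolates an outer vertex exactly when $j \in \{a, a+1\}$, i.e.\ when $a = j$ (class $3^{j,j+1}$) or $a = j-1$ (class $3^{j-1,j}$); for every other $j$ both links survive and $T^j(\po)$ stays connected. For a maniplex of class $3^{a}$ the only cut-edge is the $a$-coloured bridge $(v_1,v_2)_a$, because the doubled link $(v_2,v_3)_{a-1},(v_2,v_3)_{a+1}$ is redundant; hence $T^j(\po)$ is disconnected precisely when $j = a$, i.e.\ when $\po$ is in class $3^{j}$, while deleting $j = a-1$ or $j = a+1$ leaves the second parallel edge intact and the graph connected. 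Assembling the cases shows $T^j(\po)$ is disconnected if and only if $\po \in 3^{j} \cup 3^{j,j+1} \cup 3^{j-1,j}$, and the stated equivalence then follows by contraposition together with the face-transitivity proposition.

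I expect the only point requiring care to be the bookkeeping at the extreme colours: the classes $3^{j,j+1}$ and $3^{j}$ require $j \leq n-2$, while $3^{j}$ and $3^{j-1,j}$ require $j \geq 1$, so for $j = 0$ or $j = n-1$ some of the three named classes simply do not exist. In those border cases one notes that $\po$ is vacuously not a member of a non-existent class and checks directly that the surviving class forces connectivity exactly as the general argument predicts, so the equivalence remains correct. Apart from this, the proof is a routine finite verification once the connecting-edge structure from Proposition~\ref{stg_3-orbit} is recorded.
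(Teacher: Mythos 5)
Your proof is correct and follows essentially the same route as the paper: the paper treats this proposition as an immediate consequence of the classification in Proposition~\ref{stg_3-orbit} together with the criterion that $j$-face-transitivity is equivalent to connectivity of $T^j(\po)$, observing directly from Figure~\ref{i-transitive3} that a maniplex in class $3^{j,j+1}$ is $i$-face-transitive exactly when $i\neq j,j+1$ and one in class $3^j$ exactly when $i\neq j$. Your write-up simply makes that figure inspection explicit (including the matching argument and the boundary colours $j=0,\,n-1$), which is careful but not a different method.
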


\begin{theorem}
\label{no3-orbitfully}
There are no fully-transitive 3-orbit maniplexes.
\end{theorem}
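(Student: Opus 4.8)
The plan is to show that the symmetry type graph $T(\po)$ of a $3$-orbit maniplex can never be fully-transitive, by combining the classification from Proposition~\ref{stg_3-orbit} with the face-transitivity criterion. By Proposition~\ref{stg_3-orbit} and the discussion following it, every $3$-orbit maniplex falls into exactly one class, either $3^{j,j+1}$ for some $j \in \{0,\dots,n-2\}$, or $3^j$ for some $j \in \{1,\dots,n-2\}$. The key observation already recorded before the theorem is that a maniplex in class $3^{j,j+1}$ fails to be $j$-face-transitive (indeed it is non-transitive on both $j$- and $(j+1)$-faces), and a maniplex in class $3^j$ fails to be $j$-face-transitive. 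So in either case there is at least one rank $i$ for which the maniplex is not $i$-face-transitive, and hence it cannot be fully-transitive.

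Concretely, I would argue as follows. Let $\po$ be a $3$-orbit $(n-1)$-maniplex with symmetry type graph $T(\po)$ on three vertices $v_1, v_2, v_3$. By the proof of Proposition~\ref{stg_3-orbit}, after relabeling we may assume $T(\po)$ has an edge $(v_1,v_2)_j$ and an edge $(v_2,v_3)_{j\pm 1}$, and all non-semi-edges use only the colours $j-1, j, j+1$. Fix the colour $j$ occurring on the middle edge $(v_1,v_2)_j$. I claim $T^j(\po)$, the subgraph obtained by deleting all $j$-edges, is disconnected: deleting the $j$-edge $(v_1,v_2)_j$ separates $v_1$ from the rest unless some other edge incident to $v_1$ of a colour $\neq j$ connects it to $v_2$ or $v_3$. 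But by the classification the only edges of $T(\po)$ joining distinct vertices are the ones listed (of colours $j$ and $j\pm1$, all incident to $v_2$), so $v_1$ is joined to the graph only through the single $j$-edge and possibly semi-edges (which contribute nothing to connectivity). Hence after removing the $j$-edges, $v_1$ becomes isolated (or at least separated from $v_3$), so $T^j(\po)$ is disconnected. By the Proposition preceding this theorem, $\po$ is therefore not $j$-face-transitive.

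The main point to handle carefully is verifying that no \emph{additional} edge of some colour other than $j$ can reconnect $v_1$ to $v_2$ or $v_3$ after the $j$-edges are removed. This follows from the classification: Lemma~\ref{2factors4vertices} forces any two edges meeting at $v_2$ to have colours differing by exactly $1$, which pins down all cross-vertex edges to the colours $\{j-1,j,j+1\}$ and fixes their incidence pattern (all emanating from $v_2$). In particular $v_1$ has exactly one edge to another vertex, namely $(v_1,v_2)_j$, and all its remaining incidences are semi-edges. Thus deleting colour $j$ disconnects $v_1$.

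Finally, I would conclude uniformly: since every $3$-orbit maniplex lies in some class $3^{j,j+1}$ or $3^j$, and in each such class we have exhibited a rank $j$ at which $T^j(\po)$ is disconnected, every $3$-orbit maniplex fails to be $i$-face-transitive for at least one $i$. By the definition of fully-transitive (transitive on faces of every rank), no $3$-orbit maniplex is fully-transitive, which is the claim. I expect the only delicate step to be the bookkeeping ensuring $v_1$'s sole cross-edge is the $j$-edge; everything else is a direct reading of the earlier classification.
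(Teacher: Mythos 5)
Your proposal is correct and follows essentially the same route as the paper: the paper states this theorem without a separate proof, treating it as an immediate consequence of the classification in Proposition~\ref{stg_3-orbit} together with the preceding proposition that a $3$-orbit maniplex in class $3^j$, $3^{j,j+1}$ or $3^{j-1,j}$ fails to be $j$-face-transitive. Your write-up simply makes explicit the connectivity bookkeeping (that $v_1$ is attached to the rest of $T(\po)$ only by its single $j$-edge, so $T^j(\po)$ is disconnected), which is exactly the observation the paper reads off from Figure~\ref{i-transitive3}.
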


Using Proposition~\ref{STGofFaces}, we get some information about the number of flag orbits that the $j$-faces have:

\begin{proposition}
A $3$-orbit maniplex in class $3^j$ or $3^{j,j+1}$ has reflexible $j$-faces.
\end{proposition}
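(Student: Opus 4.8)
The plan is to reduce the statement to an application of Proposition~\ref{STGofFaces}, one orbit of $j$-faces at a time. Recall that the orbits of $j$-faces of $\po$ under $\Aut(\po)$ correspond exactly to the connected components of $T^j(\po)$, the subgraph obtained by erasing the $j$-edges of $T(\po)$. So the first thing I would do is read off these components from the two edge-configurations. In class $3^{j,j+1}$ the graph $T(\po)$ has (after the relabeling used in Proposition~\ref{stg_3-orbit}) the edges $(v_1,v_2)_j$ and $(v_2,v_3)_{j+1}$, with every remaining colour appearing only as semi-edges; in class $3^j$ it has $(v_1,v_2)_j$, $(v_2,v_3)_{j-1}$ and $(v_2,v_3)_{j+1}$. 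In both cases the only edge of colour $j$ is $(v_1,v_2)_j$, so deleting it leaves precisely two components: the isolated vertex $\{v_1\}$ and the component $\mathcal{C}=\{v_2,v_3\}$ (joined by $(v_2,v_3)_{j+1}$, and additionally by $(v_2,v_3)_{j-1}$ in class $3^j$). Hence $\po$ has exactly two orbits of $j$-faces, one attached to each component, and I must show each gives a reflexible $\po_F$, i.e.\ a one-vertex symmetry type graph.

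For the orbit corresponding to $\{v_1\}$ this is immediate: since the component is a single vertex, the surjection $\pi\colon V(\{v_1\})\to V(T(\po_F))$ of Proposition~\ref{STGofFaces} forces $T(\po_F)$ to have a single vertex, so this face is reflexible. The substantive case is the component $\mathcal{C}=\{v_2,v_3\}$. The key observation is that $\mathcal{C}$ contains the edge $(v_2,v_3)_{j+1}$, whose colour $j+1$ is strictly greater than the face-colour $j$. Applying Proposition~\ref{STGofFaces} with face-colour $i=j$ and running colour $j+1>j$, this edge collapses under $\pi$, yielding $\pi(v_2)=\pi(v_3)$. Since $\pi$ is surjective and both vertices of $\mathcal{C}$ share the same image, $T(\po_F)$ again has exactly one vertex, so the corresponding $j$-face is reflexible. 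In class $3^j$ the extra edge $(v_2,v_3)_{j-1}$ has colour $j-1<j$ and so produces a genuine $(j-1)$-edge in $T(\po_F)$; but because $v_2$ and $v_3$ have already been identified, this becomes a semi-edge at the single remaining vertex and does not disturb the count. Thus both orbits of $j$-faces are reflexible in each class.

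The only point requiring care—rather than any real difficulty—is the bookkeeping between the colour $j$ that is \emph{erased} to produce the $j$-faces and the colour $j+1$ that \emph{survives} the quotient and then collapses inside a face; keeping the face-colour distinct from the running colour index of Proposition~\ref{STGofFaces} is what makes the argument go through cleanly. Once that distinction is fixed, the conclusion follows uniformly from the fact that in both classes the nontrivial component of $T^j(\po)$ is held together by an edge of colour exceeding $j$.
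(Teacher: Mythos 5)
Your proof is correct and follows essentially the same route as the paper: both identify the two components of $T^j(\po)$ and apply Proposition~\ref{STGofFaces}, so that the isolated vertex gives a one-vertex symmetry type graph immediately, while the $(j+1)$-edge, having colour exceeding $j$, collapses the two-vertex component to a single vertex. Your extra bookkeeping remark that the $(j-1)$-edge in class $3^j$ survives only as a semi-edge is a detail the paper leaves implicit (indeed the paper's parenthetical attributes that edge to the wrong class, a typo your version avoids).
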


\begin{proof}
If $\po$ is a $3$-orbit maniplex, then the orbits of the $j$-faces correspond to the connected components of 
$T^j(\po)$. Assuming that $\po$ is in class $3^j$ or $3^{j,j+1}$, the graph $T^j(\po)$ has two connected components;
an isolated vertex, and two vertices that are connected by a $(j+1)$-edge (and a $(j-1)$-edge, if $\po$ is in class
$3^{j,j+1}$. Then by Proposition~\ref{STGofFaces}, the $j$-faces that correspond to the isolated vertex
are reflexible (that is, 1-orbit), and the edge with label $j+1$ forces an identification between the two
vertices of the second component, so the $j$-faces in that component are also reflexible.
\end{proof}

\subsection{On the symmetry type graphs of 4-orbit maniplexes}
\label{sec:4notfully}

It does not take long to realise that counting the number of symmetry type graphs with $k\geq4$ vertices, and perhaps classifying them in a similar fashion as was done for 2 and 3 vertices,  becomes considerably more difficult.
In this section, we shall analyse symmetry type graphs with 4 vertices and determine how far a 4-orbit maniplex can be from being fully-transitive. The following lemma is a consequence of the fact that by taking away the $i$-edges of a symmetry type graph $T(\po$), the resulting $T^i(\po)$ cannot have too many components.

\begin{lemma}
\label{4orbMi-faceorb}
Let $\po$ be a 4-orbit $(n-1)$-maniplex and let $i \in \{0, \dots, n-1\}$. Then $\po$ has one, two or three orbits of $i$-faces.
\end{lemma}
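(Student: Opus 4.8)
The plan is to translate the statement about orbits of $i$-faces into one about connected components of the subgraph $T^i(\po)$, and then to bound the number of such components using the fact that $T(\po)$ has only four vertices together with the structure of its edge-coloring. Recall from the discussion preceding the $i$-face-transitivity proposition that the orbits of $i$-faces under $\Aut(\po)$ are in bijection with the connected components of $T^i(\po)$, the graph obtained by deleting all $i$-edges from $T(\po)$. Since $\po$ is a 4-orbit maniplex, $T(\po)$ has exactly four vertices, so $T^i(\po)$ also has four vertices and hence between one and four connected components. Thus it suffices to show that the number of components is never four, that is, that $T^i(\po)$ is never totally disconnected.

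First I would observe that $T^i(\po)$ consists of four isolated vertices precisely when every edge of $T(\po)$ joining two \emph{distinct} vertices has color $i$; equivalently, all edges of colors $\neq i$ are semi-edges. Next I would invoke the fact, noted earlier, that the edges of any single color form a perfect matching (where a vertex may be matched to itself by a semi-edge). In particular, the genuine (non-semi) $i$-edges constitute a partial matching on the four vertices, so there are at most two of them.

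The crux of the argument is then a connectivity contradiction. If all edges between distinct vertices had color $i$, then the connectivity of $T(\po)$ would have to be realized by the $i$-edges alone; but a matching consisting of at most two edges cannot connect four vertices into a single component. This contradicts the fact, established earlier, that $T(\po)$ is connected (because $\Mon(\po)$ acts transitively on its vertices). Hence $T^i(\po)$ has at most three components, and $\po$ has one, two, or three orbits of $i$-faces.

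I expect the only delicate point to be the bookkeeping around semi-edges: one must be careful that a semi-edge of color $i$ at a vertex contributes nothing to connectivity between distinct vertices, so that the claim ``the $i$-edges between distinct vertices form a matching of size at most two'' is applied correctly. Everything else is an immediate consequence of the four-vertex constraint and the perfect-matching structure of each color class, so no substantial computation is needed.
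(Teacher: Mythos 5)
Your proof is correct and follows essentially the same route as the paper, which presents the lemma as an immediate consequence of the fact that deleting the $i$-edges from the connected four-vertex graph $T(\po)$ cannot produce too many components. You have simply made explicit the ingredients the paper leaves implicit: the orbits of $i$-faces correspond to the components of $T^i(\po)$, the genuine $i$-edges form a matching of size at most two, and removing such a matching from a connected graph on four vertices cannot leave four isolated vertices.
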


If an $(n-1)$-maniplex $\po$ is not fully-transitive, there exists at least one $i \in \{0, \dots, n-1\}$ such that $T^i(\po)$ is disconnected. 
We shall divide the analysis of the types  in three parts: when $T^{i}(\po)$ has three connected components (two of them of one vertex and one with two vertices), when $T^i(\po)$ has a connected component with one vertex and another connected component with three vertices, and finally when $T^{i}(\po)$ has two connected components with two vertices each.
Before we start the case analysis, we let $v_1,v_2,v_3,v_4$ be the vertices of $T(\po)$.

Suppose that $T^{i}(\ma)$ has three connected components with $v_2$ and $v_3$ in the same component. 
Without loss of generality we may assume that $T(\po)$ has edges $(v_1,v_2)_{i}$ and $(v_3,v_4)_{i}$.
Let $k\in\{0,1, \dots, n-1\} \setminus \{i\}$ be the colour of an edge between $v_2$ and $v_3$. 
Since there is no edge of $T(\po)$ between $v_1$ and $v_4$, Lemma~\ref{2factors4vertices} implies that there are at most two such possible $k$, namely $k = i-1$ and $k = i+1$.
If $i \neq 0, n-1$, $T(\po)$ can have either both edges or exactly one of them, while if $i\in \{0, n-1\}$ there is one possible edge (see Figure~\ref{3components4}).

\begin{figure}[htbp]
\begin{center}
\includegraphics[width=10cm]{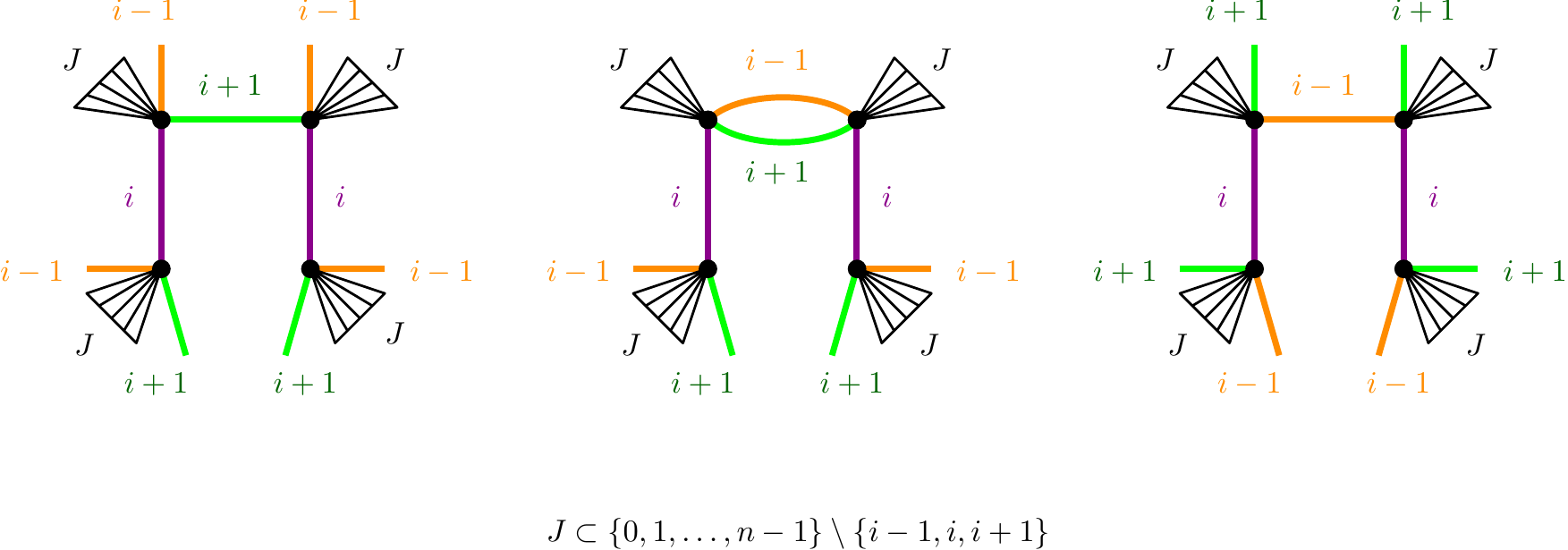}
\caption{Symmetry type graphs of an $(n-1)$-maniplex $\ma$ with four orbits on its flags, and three orbits on its $i$-faces.}
\label{3components4}
\end{center}
\end{figure}

Let us now assume that $T^{i}(\ma)$ has two connected components, one consisting of the vertex $v_1$ and the other one containing vertices $v_2, v_3$ and $v_4$. 
This means that the $i$-edge incident to $v_1$ is the unique edge that connects this vertex with the rest of the graph and, without loss of generality, $T(\po)$ has the edge $(v_1,v_2)_{i}$. 
As with the previous case, Lemma~\ref{2factors4vertices} implies that an edge between $v_2$ and $v_3$ has colour either $i-1$ or $i+1$.

First observe that having either  $(v_2,v_3)_{i-1}$ or $(v_2,v_3)_{i+1}$ in $T(\po)$ immediately implies (by Lemma~\ref{2factors4vertices}) that there is no edge between $v_2$ and $v_4$.
Now, if both edges $(v_2,v_3)_{i-1}$ and $(v_2,v_3)_{i+1}$ are in $T(\po)$, then an edge between $v_3$ and $v_4$ would have to have colour $i$, contradicting the fact that $T^i(\po)$ has two connected components.
Hence, there is exactly one edge between $v_2$ and $v_3$.
It is now straightforward to see that $T(\po)$ should be as one of the graphs in Figure~\ref{2components4}, implying that
 there are exactly four symmetry type graphs with these conditions for each $i\neq 0,1,n-2, n-1$, but only two symmetry type graph of this kind when $i=0, 1, n-2,$ or $n-1$.

\begin{figure}[htbp]
\begin{center}
\includegraphics[width=7cm]{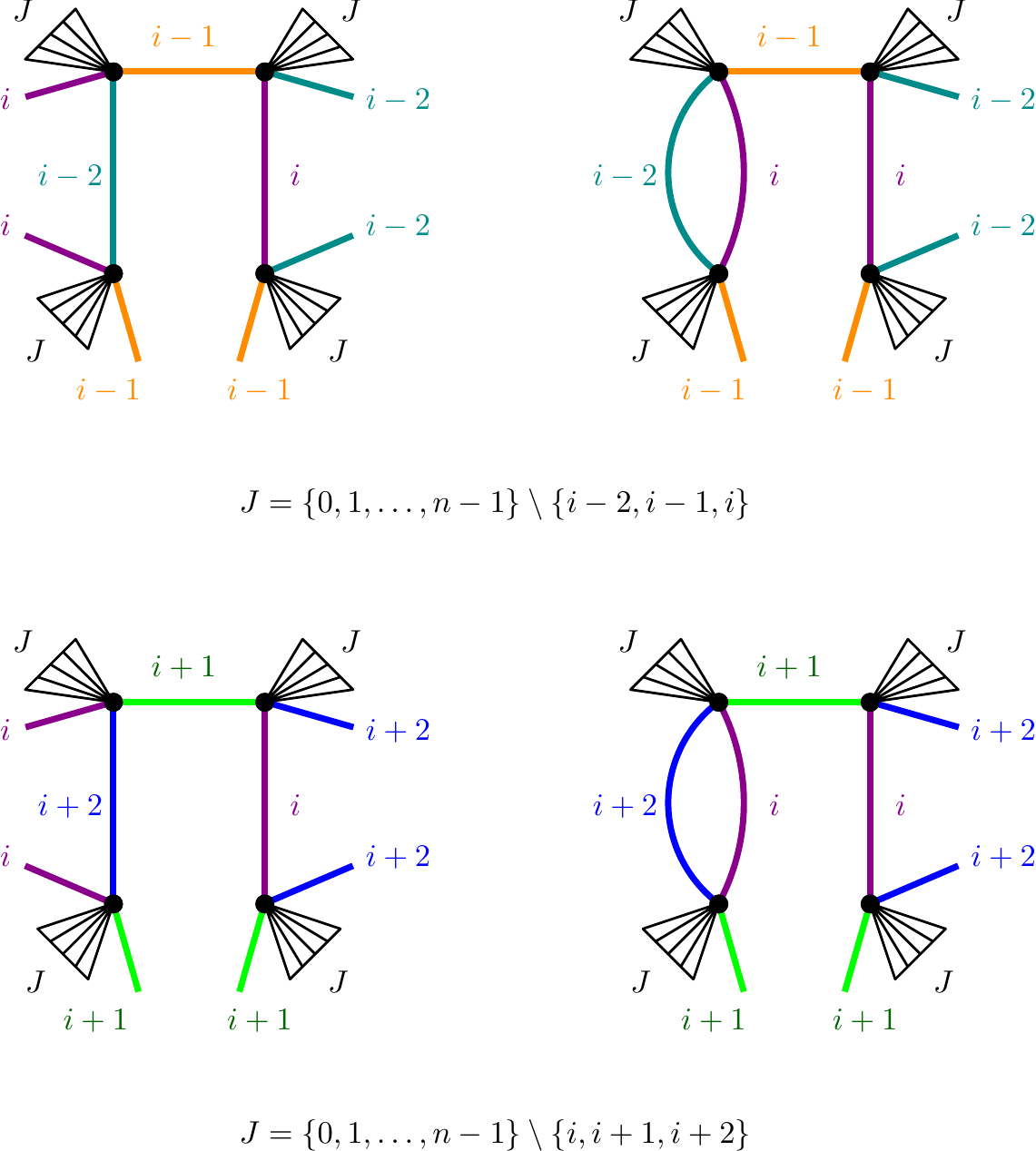}
\caption{Symmetry type graphs of $(n-1)$-maniplexes with four orbits on its flags, and two orbits on its $i$-faces such that one contains three flag orbits and the other contains a single flag orbit.}
\label{2components4}
\end{center}
\end{figure}

It is straightforward to see from Figure~\ref{2components4} that the next lemma follows.

\begin{lemma}
Let $\po $ be a 4-orbit $(n-1)$-maniplex with two orbits of $i$-faces such that $T^{i}(\ma)$ has a connected component consisting of one vertex, and another one consisting of three vertices. Then either $T^{i-1}(\ma)$ or $T^{i+1}(\ma)$ has two connected components, each with two vertices.
\end{lemma}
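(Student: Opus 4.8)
The plan is to read off the conclusion directly from the case analysis that produced Figure~\ref{2components4}. Since $\po$ is a 4-orbit maniplex with two orbits of $i$-faces, where $T^i(\po)$ splits into a single-vertex component $\{v_1\}$ and a three-vertex component $\{v_2,v_3,v_4\}$, the preceding discussion has already established that $T(\po)$ must be one of the graphs in Figure~\ref{2components4}. In each of these graphs, there is an $i$-edge $(v_1,v_2)_i$, exactly one edge of colour $i-1$ or $i+1$ joining $v_2$ and $v_3$, and then a forced edge joining $v_3$ and $v_4$. My strategy is to examine the colour of this last edge $(v_3,v_4)$ and then count the resulting components of $T^{i-1}(\po)$ and $T^{i+1}(\po)$.

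First I would fix, without loss of generality, that the unique edge between $v_2$ and $v_3$ has colour $i+1$ (the colour $i-1$ case being symmetric by relabelling colours $\ell \mapsto n-1-\ell$). By Lemma~\ref{2factors4vertices} applied to the path $(v_1,v_2)_i$ followed by $(v_2,v_3)_{i+1}$, there can be no edge between $v_1$ and $v_3$, and as noted there is no edge between $v_2$ and $v_4$. The edge joining $v_3$ to the remaining vertex $v_4$ cannot be of colour $i$ (that would reconnect the two components of $T^i(\po)$) and cannot be of colour $i+1$ (each colour forms a perfect matching, and $v_3$ already meets an $(i+1)$-edge to $v_2$). Hence $(v_3,v_4)$ has colour $i-1$, or possibly there is a further edge arrangement forcing the $2+2$ split; inspecting Figure~\ref{2components4} confirms the edge $(v_3,v_4)_{i-1}$ appears in the relevant graphs.

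Now I would carry out the component count. Deleting all $(i+1)$-edges from $T(\po)$ removes the edge $(v_2,v_3)_{i+1}$. In the resulting graph $T^{i+1}(\po)$, the remaining edges of interest are $(v_1,v_2)_i$ and $(v_3,v_4)_{i-1}$, which partition the four vertices into the two pairs $\{v_1,v_2\}$ and $\{v_3,v_4\}$; since Lemma~\ref{2factors4vertices} and the perfect-matching condition forbid any additional edges bridging these pairs (any bridging edge would have a colour creating a forbidden configuration), $T^{i+1}(\po)$ has exactly two connected components, each with two vertices. This is precisely the claimed conclusion, and in the symmetric case where the $v_2v_3$ edge has colour $i-1$, the same argument applied to $T^{i-1}(\po)$ yields two two-vertex components.

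The main obstacle is the bookkeeping of which edges may or may not be present: I must verify that no stray edge (of a colour other than $i,i-1,i+1$) connects the two pairs in $T^{i\pm1}(\po)$. Here the key leverage is again Lemma~\ref{2factors4vertices}, since any edge of colour $\ell$ with $|\ell - i| \geq 2$ incident to $v_2$ or $v_3$, combined with the $i$-edge or the $(i\pm1)$-edge already present, would force a $(j,\ell)$ 2-factor component on four vertices and thereby introduce an edge pattern incompatible with the graphs enumerated in Figure~\ref{2components4}. Thus the enumeration underlying Figure~\ref{2components4} already constrains the full edge set of $T(\po)$, and the lemma follows by direct inspection of those finitely many graphs rather than by any new argument.
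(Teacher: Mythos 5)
Your overall strategy---reading the conclusion off the enumeration that produces Figure~\ref{2components4} and then counting components of $T^{i\pm1}(\po)$---is exactly what the paper does, and your final component count is correct. But your reconstruction of those graphs contains a genuine error, and it matters because it inverts the key lemma you cite. Having fixed $(v_1,v_2)_i$ and $(v_2,v_3)_{i+1}$, you conclude that the edge joining $v_3$ and $v_4$ has colour $i-1$. That configuration is precisely what Lemma~\ref{2factors4vertices} forbids: the path $(v_2,v_3)_{i+1}$, $(v_3,v_4)_{i-1}$ has colours differing by $2$, so the $(i-1,i+1)$ $2$-factor component containing $v_3$ would have to have four vertices, hence contain $v_1$; but $v_1$ is incident to no edge other than $(v_1,v_2)_i$. (This hypothesis on $v_1$, not the lemma, is also the reason there is no edge between $v_1$ and $v_3$---your application of Lemma~\ref{2factors4vertices} to the path of colours $i$ and $i+1$ is vacuous, since those colours differ by $1$.) The correct colour for the $v_3$--$v_4$ edge is $i+2$ (mirrored, $i-2$ when the $v_2$--$v_3$ edge has colour $i-1$). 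Likewise your exclusion of colour $i$ is argued backwards: all $i$-edges are deleted in $T^i(\po)$, so an $i$-edge between $v_3$ and $v_4$ cannot ``reconnect'' the two components; a lone $(v_3,v_4)_i$ is impossible because it would leave $v_4$ isolated in $T^i(\po)$, giving three components, but $(v_3,v_4)_i$ does occur as a parallel edge alongside $(v_3,v_4)_{i\pm2}$ in two of the four graphs of Figure~\ref{2components4}---these double-edge cases are missing from your analysis, which is why you implicitly arrive at two graphs where the paper counts four for $i \neq 0,1,n-2,n-1$.

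That said, the damage is confined to the description of the graphs, not to the final step: whatever edges join $v_3$ and $v_4$, none can have colour $i+1$ (perfect matching at $v_3$), and no edge other than $(v_2,v_3)_{i+1}$ runs between $\{v_1,v_2\}$ and $\{v_3,v_4\}$; so deleting the $(i+1)$-edges still yields exactly the two components $\{v_1,v_2\}$ and $\{v_3,v_4\}$, and symmetrically $T^{i-1}(\po)$ works in the mirrored case. The lemma therefore does follow by your method once the graphs are described correctly, but as written your middle paragraph asserts a configuration that Lemma~\ref{2factors4vertices} rules out and appeals to a figure that does not show what you say it shows.
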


Finally, we turn out our attention to the case where $T^{i}(\ma)$ has two connected components, with two vertices each. Suppose that $v_1$ and $v_2$ belong to one component, while $v_3$ and $v_4$ belong to the other. 
As the two components must be connected by the edges of colour $i$, we may assume that $(v_1,v_3)_{i}$ is an edge of $T(\ma)$. 
If the vertices $v_2$ and $v_4$ have semi-edges of colour $i$, Lemma~\ref{2factors4vertices} implies that $T(\po)$ is one of the graphs  shown in Figure~\ref{2-2components4}.

\begin{figure}[htbp]
\begin{center}
\includegraphics[width=10cm]{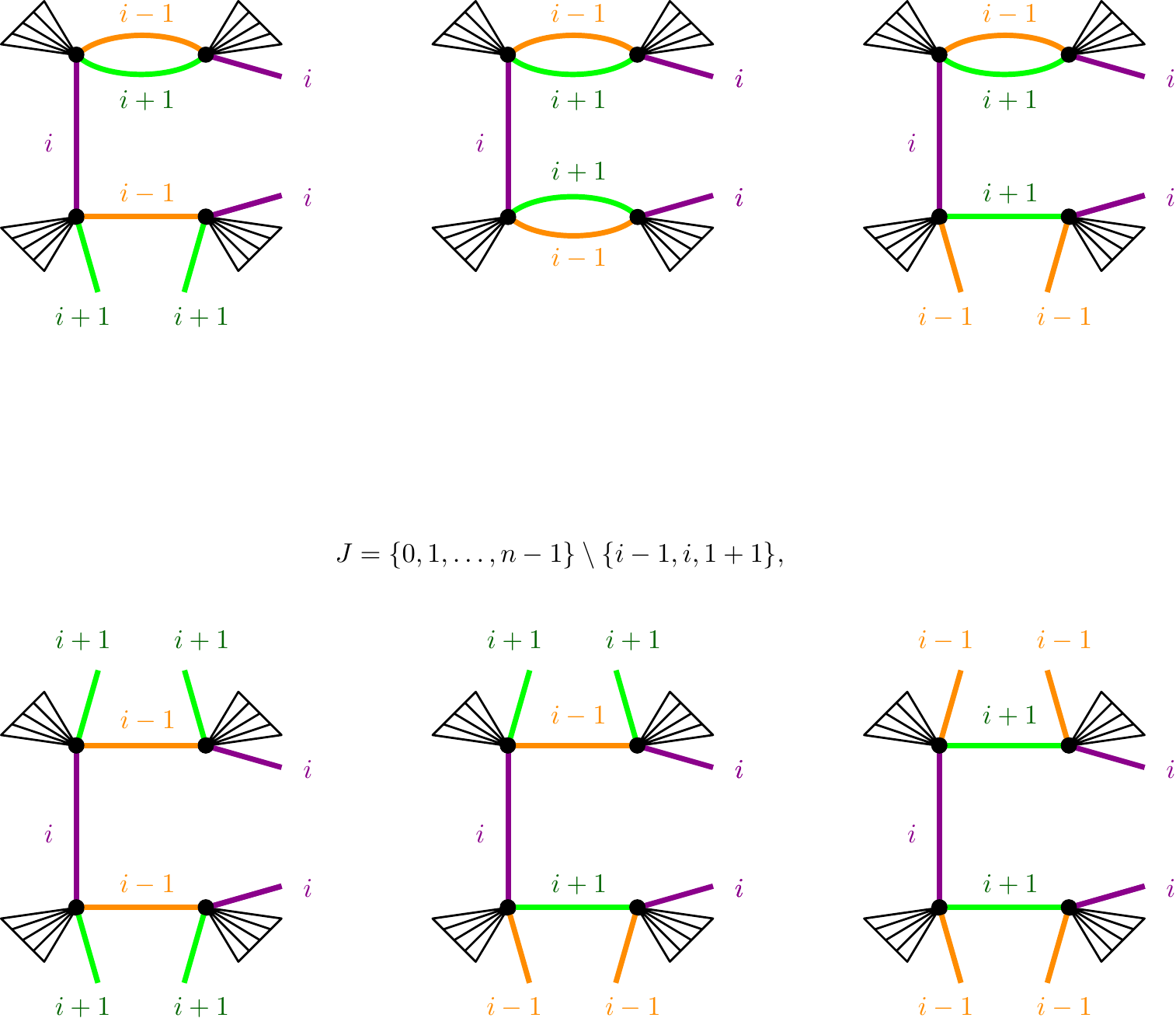}
\caption{Six of the symmetry type graphs of $(n-1)$-maniplexs with four orbits on its flags, and two orbits on its $i$-faces such that each contains two flag orbits.}
\label{2-2components4}
\end{center}
\end{figure}

On the other hand, if $(v_1,v_3)_{i}$ and $(v_2,v_4)_{i}$ are both edges of  $T(\ma)$, given $j \in \{0,1, \dots, n-1\} \setminus \{i-1, i, i+1\}$, we use again Lemma~\ref{2factors4vertices} to see that $(v_1,v_2)_j$ is an edge of $T(\po)$ if and only if $(v_3,v_4)_j$ is also an edge of $T(\po)$. 
By contrast, $T(\po)$ can have either two edges of colour $i\pm1$ (each joining the vertices of each connected component of $T^{i}(\po)$), four semi-edges or an edge and two semi-edges of colour $i\pm1$.
Hence, if $i \neq 0, n-1$, for each $J \subset \{0, 1, \dots, n-1\} \setminus \{i-1, i, i+1\}$ there are ten symmetry type graph with semi-edges of colours in $J$ and edges of colours not in $J$, as shown in Figures~\ref{3(n-2)_2compA} and~\ref{3(n-2)_2compB}, 
while for $J=\{0,1, \dots, n-1\} \setminus \{i-1, i, i+1\}$ there are six such graphs (shown in Figure~\ref{3(n-2)_2compB}). 
On the other hand if $i \in\{0, n-1\}$, for each $J \subset \{0, 1, \dots, n-1\} \setminus \{i-1, i, i+1\}$ there are two graphs as in Figure~\ref{3(n-2)_2compA} and one as in Figure~\ref{3(n-2)_2compB}, 
while for $J=\{0,1, \dots, n-1\} \setminus \{i-1, i, i+1\}$, there is only one of the graphs in Figure~\ref{3(n-2)_2compB}.

\begin{figure}[htbp]
\begin{center}
\includegraphics[width=7cm]{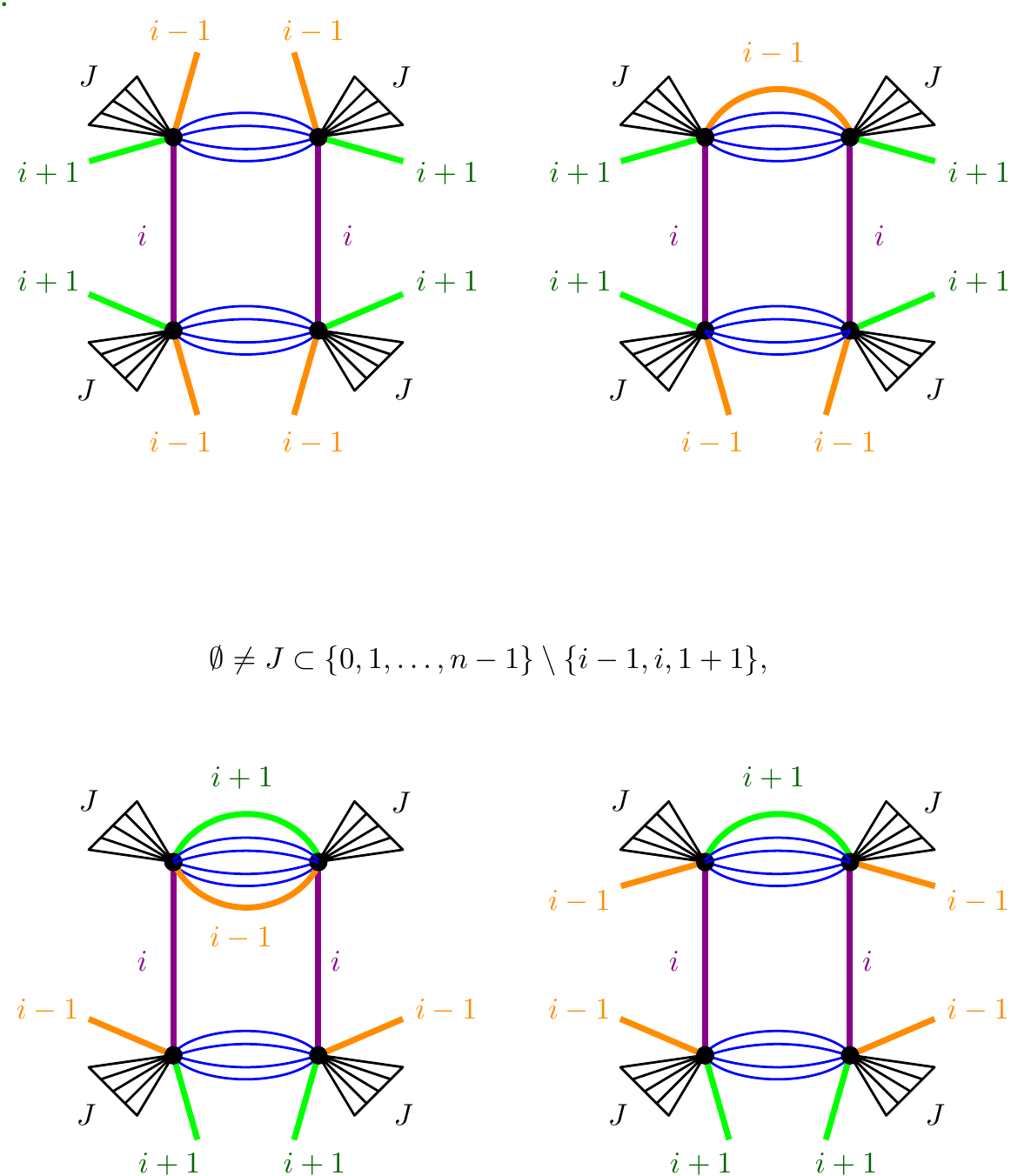}
\caption{Four families of possible symmetry type graphs of $(n-1)$-maniplexes with four orbits on its flags, and two orbits on its $i$-faces such that each contains two flag orbits.}
\label{3(n-2)_2compA}
\end{center}
\end{figure}

\begin{figure}[htbp]
\begin{center}
\includegraphics[width=12cm]{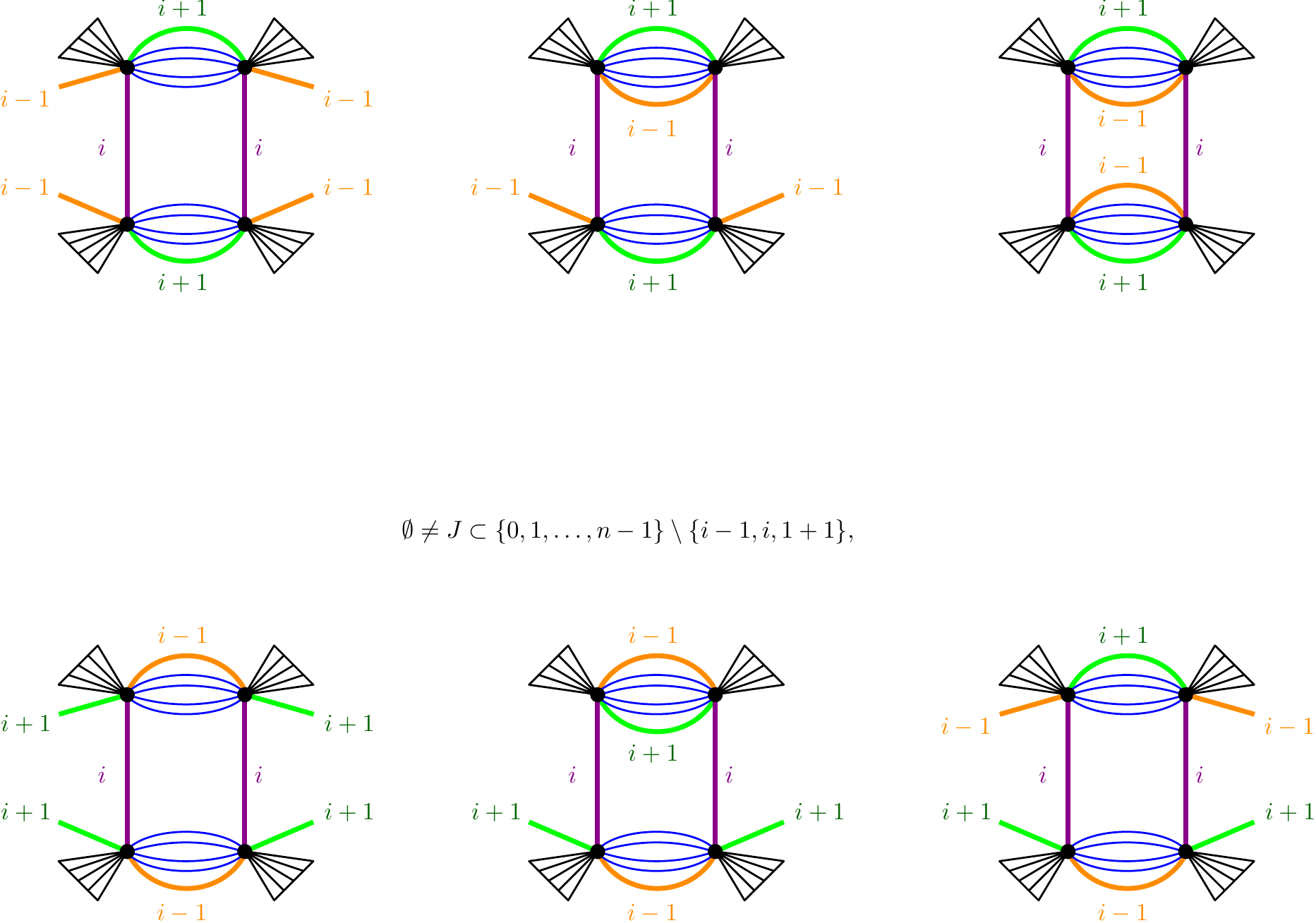}
\caption{The remaining  six families of possible symmetry type graphs of $(n-1)$-maniplexes with four orbits on its flags, and two orbits on its $i$-faces such that each contains two flag orbits.}
\label{3(n-2)_2compB}
\end{center}
\end{figure}

\newpage

We summarize our analysis of the transitivity of 4-orbit maniplexes below.

\begin{theorem}
Let $\po$ be a 4-orbit maniplex. Then, one of the following holds.
\begin{enumerate}
\item $\po$ is fully-transitive.
\item There exists $i \in \{0, \dots, n-1\}$ such that $\po$ is $j$-face-transitive for all $j\neq i$.
\item There exist $i, k \in \{0, \dots, n-1\}$, $i \neq k$, such that $\po$ is $j$-face-transitive for all $j\neq i, k$.
\item There exists $i \in \{0, \dots, n-1\}$ such that $\po$ is $j$-face-transitive for all $j\neq i, i \pm 1$.
\end{enumerate}
\end{theorem}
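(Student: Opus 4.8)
The plan is to translate the statement entirely into the language of the symmetry type graph $T(\po)$ and then read off the conclusion from the graphs already enumerated in this section. By the proposition characterizing $i$-face-transitivity (namely, that $\po$ is $i$-face-transitive exactly when $T^i(\po)$ is connected), it suffices to understand the \emph{exceptional set} $B := \{\, j \in \{0,\dots,n-1\} : T^j(\po) \text{ is disconnected}\,\}$. Indeed, the four alternatives of the theorem correspond precisely to $B = \emptyset$, $B = \{i\}$, $B = \{i,k\}$, and $B = \{i-1,i,i+1\}$. Since $\po$ is a $4$-orbit maniplex, $T(\po)$ has exactly four vertices $v_1,v_2,v_3,v_4$ and is connected, so the whole argument takes place in a fixed finite family of small coloured pregraphs, and I only need to show that $B$ always has one of these four shapes.

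If $B = \emptyset$ we are in alternative (1). Otherwise I would fix some $i \in B$. By Lemma~\ref{4orbMi-faceorb}, $T^i(\po)$ has either two or three connected components, so the partition of the four vertices into components is one of $(2,1,1)$, $(3,1)$, or $(2,2)$. These are exactly the three configurations treated above, and Lemma~\ref{2factors4vertices} pins down all admissible $T(\po)$ in each: the graphs of Figure~\ref{3components4} (for $(2,1,1)$), Figure~\ref{2components4} (for $(3,1)$), and Figures~\ref{2-2components4}, \ref{3(n-2)_2compA} and~\ref{3(n-2)_2compB} (for $(2,2)$). The core step is then mechanical: for each listed graph I compute $B$ by deleting, colour by colour, the corresponding perfect matching (allowing semi-edges) and checking connectivity. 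A colour carried only by semi-edges can never disconnect $T(\po)$, so $B$ is contained in the set of colours appearing on a full edge, and the whole question reduces to how these full edges are forced to sit together.

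The decisive structural input is Lemma~\ref{2factors4vertices}: whenever two full edges of colours $a,b$ with $|a-b|\ge 2$ meet at a vertex, the $(a,b)$ $2$-factor through that vertex is a single $4$-cycle covering all of $v_1,\dots,v_4$, so colours $a$ and $b$ each occur on exactly two full edges alternating around this cycle. From this I expect two, and only two, ways for $B$ to contain more than one colour. First, a path-type full-edge graph $v_1\!-\!v_2\!-\!v_3\!-\!v_4$: here no spanning $4$-cycle is available, so consecutive full edges must differ in colour by exactly $1$, which forces either an adjacent pair (when the two end-edges repeat a colour) or the consecutive triple $\{i-1,i,i+1\}$ (when all three colours are distinct), giving alternatives (3) and (4). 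Second, an alternating $(i,k)$ $4$-cycle: deleting colour $i$ or colour $k$ splits it as $(2,2)$, while every other colour sits on semi-edges only, so $B=\{i,k\}$ with $i$ and $k$ possibly far apart, again alternative (3).

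The main obstacle, and where most of the care goes, is the $(2,2)$ and $(3,1)$ analysis when far-apart colours $j$ with $|j-i|\ge 2$ appear on full edges, exactly the situation organised by the families of Figures~\ref{3(n-2)_2compA} and~\ref{3(n-2)_2compB}. There one must verify, by repeated application of Lemma~\ref{2factors4vertices}, that such far full edges always occur in the matched pairs forced by a spanning $4$-cycle, so that deleting their colour produces a clean $(2,2)$ separation and no \emph{additional} colour is promoted to a cut. This simultaneously rules out a fourth exceptional rank (so that $|B|\le 3$) and shows that a triple can only arise from the path-type graph and is therefore consecutive. Assembling the per-graph computations then yields that $B$ is always $\emptyset$, a singleton, a pair, or a set of three consecutive colours, which are precisely alternatives (1)--(4).
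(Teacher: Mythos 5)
Your proposal is correct and follows essentially the same route as the paper: the theorem there is explicitly a summary of the preceding case analysis, which is exactly the trichotomy you use — splitting on whether $T^i(\po)$ has components of sizes $(2,1,1)$, $(3,1)$ or $(2,2)$ (Lemma~\ref{4orbMi-faceorb}), constraining the admissible coloured graphs via Lemma~\ref{2factors4vertices}, and reading off the face-transitivities from the graphs enumerated in Figures~\ref{3components4}, \ref{2components4}, \ref{2-2components4}, \ref{3(n-2)_2compA} and~\ref{3(n-2)_2compB}. Your reformulation in terms of the exceptional set $B$, and the observation that pairs can be non-consecutive (from alternating $4$-cycles) while triples arise only from path-type graphs and are therefore consecutive, simply makes explicit the final bookkeeping that the paper leaves implicit.
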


\subsection{On fully-transitive $n$-maniplexes for small $n$}

Every 1-maniplex is reflexible and hence fully-transitive.
Fully-transitive 2-maniplexes correspond to fully-transitive maps. It is well-known (and easy to see from the symmetry type graph) that if a map is edge-transitive, then it should have one, two or four orbits. Moreover, a fully-transitive map should be regular, a two-orbit map in class 2, $2_0$, $2_1$ or $2_2$, or a four-orbit map in class $4_{Gp}$ or $4_{Hp}$ (see, for example, \cite{medial}).

When considering fully-transitive $n$-maniplexes, $n \geq 3$, the analysis becomes considerably more complicated. 
In \cite{2-orbit} Hubard shows that there are $2^{n+1} - n -2$ classes of fully-transitive two-orbit $n$-maniplexes. By Theorem~\ref{no3-orbitfully}, there are no 3-orbit fully-transitive $n$-maniplexes. 
We note that there are 20 symmetry type graphs of 4-orbit 3-maniplexes that are fully transitive (see Figure~\ref{4orbitfully}).

\begin{figure}[htbp]
\begin{center}
\includegraphics[width=9cm]{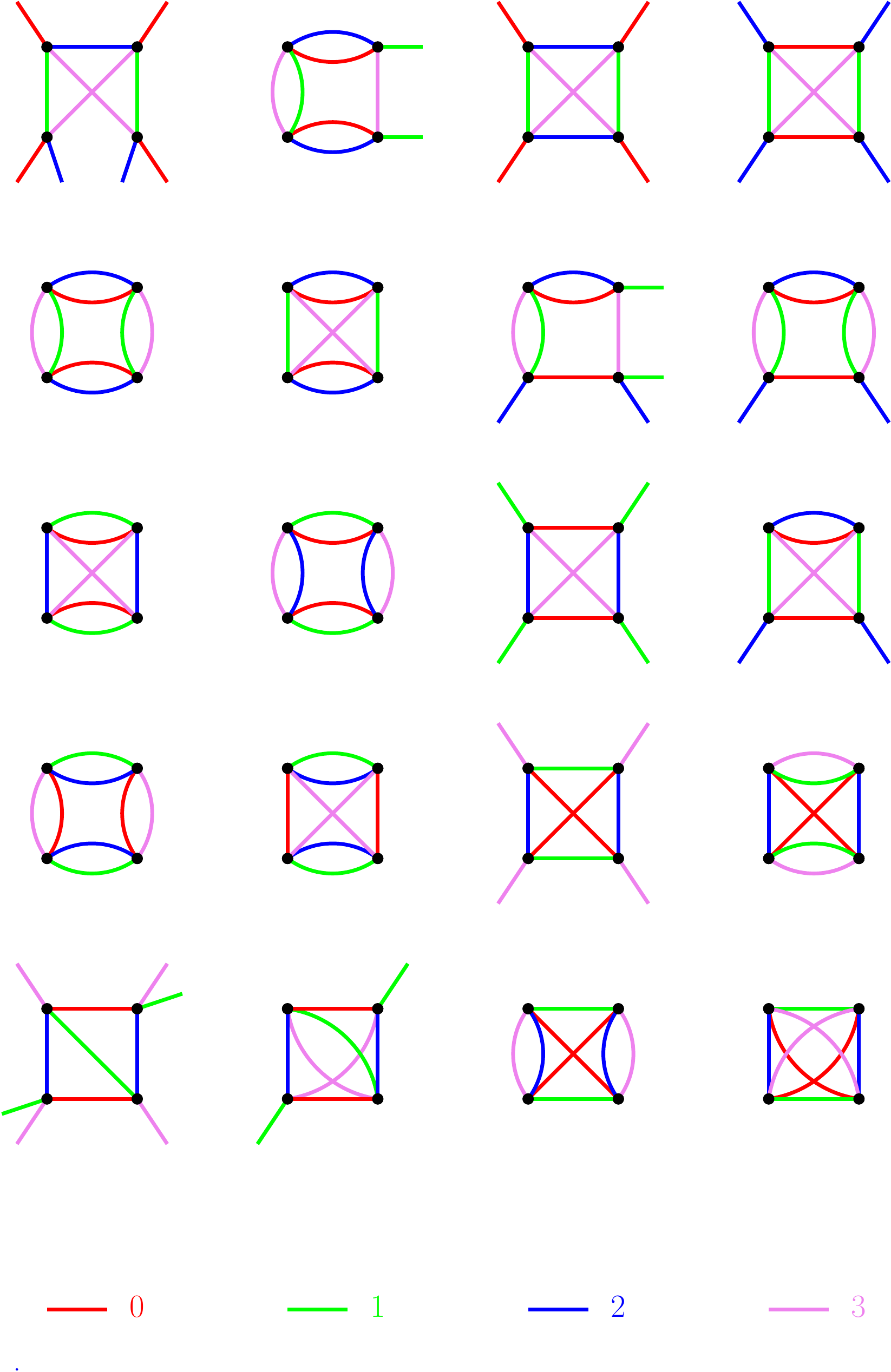}
\caption{Symmetry type graphs of 4-orbit fully-transitive 3-maniplexes}
\label{4orbitfully}
\end{center}
\end{figure}

The following theorem shall be of great use to show that a fully-transitive 3-maniplex must have an even number of flag orbits unless it is reflexible.

\begin{theorem}
Let $\po$ be a fully-transitive 3-maniplex and let $T(\po)$ be its symmetry type graph. Then either $\po$ is reflexible or $T(\po)$ has an even number of vertices.
\end{theorem}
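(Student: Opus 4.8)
The plan is to work directly with the action of the connection group $\Mon(\po)$ on the vertex set $V(T(\po))$, exploiting the string (commuting) relations among the distinguished generators in rank $3$. For each colour $i\in\{0,1,2,3\}$ let $\rho_i$ denote the permutation of $V(T(\po))$ induced by $r_i$; by Lemma~\ref{orbitTOorbit} this is a well-defined permutation with $\rho_i^2=\mathrm{id}$, so each $\rho_i$ is either the identity or a genuine involution. Since $r_ir_j=r_jr_i$ whenever $|i-j|\ge 2$, the induced permutations commute as well: in particular $\rho_0$ commutes with both $\rho_2$ and $\rho_3$, while $\rho_3$ commutes with both $\rho_0$ and $\rho_1$. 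Because $\po$ is fully-transitive, each $T^i(\po)$ is connected, so by the proposition characterising $i$-face-transitivity in terms of connectedness of $T^i(\po)$ the subgroup $G_i:=\langle \rho_j \mid j\neq i\rangle$ acts transitively on $V(T(\po))$ for every $i$.

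The observation I would isolate as the central step is that $\rho_0$ lies in the centre of $G_1=\langle \rho_0,\rho_2,\rho_3\rangle$, since it commutes with both of the other generators, and symmetrically $\rho_3$ lies in the centre of $G_2=\langle \rho_0,\rho_1,\rho_3\rangle$. I would then invoke the elementary fact that a central element $z$ of a transitive permutation group is either the identity or fixed-point-free: if $z(v)=v$ then $z(gv)=gz(v)=gv$ for all $g$, so $z$ fixes the entire (transitive) orbit. Applying this to $\rho_0$ acting through the transitive group $G_1$, and to $\rho_3$ acting through the transitive group $G_2$, shows that each of $\rho_0,\rho_3$ is either trivial on $V(T(\po))$ or a fixed-point-free involution. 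In the fixed-point-free case the set $V(T(\po))$ is partitioned into transpositions, hence has an even number of vertices, which is exactly the desired conclusion.

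It remains to handle the degenerate case in which both $\rho_0$ and $\rho_3$ act as the identity, that is, colours $0$ and $3$ produce a semi-edge at every vertex; this short case is where I expect the only real subtlety to lie. Here I would feed the triviality of $\rho_0$ and $\rho_3$ back into the transitivity of $G_1=\langle \rho_0,\rho_2,\rho_3\rangle$, which now collapses to $\langle\rho_2\rangle$, a cyclic group of order at most $2$; transitivity of such a group forces $|V(T(\po))|\le 2$. Combining the cases, writing $k:=|V(T(\po))|$, either some $\rho_\ell$ ($\ell\in\{0,3\}$) is fixed-point-free and $k$ is even, or both are trivial and $k\le 2$; the only odd value surviving is $k=1$, which is precisely the reflexible maniplex. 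Thus if $\po$ is not reflexible then $k$ is even. The points needing care are merely the clean identification of transitivity of $G_i$ with connectedness of $T^i(\po)$ and the verification that the rank-$3$ commuting relations descend faithfully to $V(T(\po))$, both of which are supplied by the constructions of Section~\ref{sec:stg}.
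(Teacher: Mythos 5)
Your proof is correct, but it takes a genuinely different route from the paper's. The paper argues by contradiction and parity: if $T(\po)$ had an odd number of vertices greater than 1, some connected component of the $(0,2)$ 2-factor would be a single vertex carrying $0$- and $2$-semi-edges (components have 1, 2 or 4 vertices by Lemma~\ref{2factors4vertices}), and it then runs a local case analysis on the neighbourhood of that vertex, repeatedly using full transitivity to forbid the graph from disconnecting when a colour class is deleted, finishing with an induction on the number of vertices. You instead push everything into permutation-group language: the $\Mon(\po)$-action on $V(T(\po))$ (well-defined by Lemma~\ref{orbitTOorbit}) turns each colour into an involution $\rho_i$, full transitivity becomes transitivity of the subgroups $G_i=\langle\rho_j\mid j\ne i\rangle$ (this is exactly the connectivity of $T^i(\po)$), the string relations make $\rho_0$ central in $G_1$ and $\rho_3$ central in $G_2$, and the standard fact that a central element of a transitive permutation group is either trivial or fixed-point-free yields the dichotomy; a fixed-point-free involution forces an even vertex count, while the degenerate case $\rho_0=\rho_3=\mathrm{id}$ collapses $G_1$ to $\langle\rho_2\rangle$, whose transitivity forces at most 2 vertices, leaving only the reflexible case among odd counts. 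What your approach buys: it is shorter, it avoids the paper's somewhat sketchy terminal induction, and its first half generalizes verbatim to higher rank (in any fully-transitive $(n-1)$-maniplex with $n\ge3$, each of $\rho_0$ and $\rho_{n-1}$ acts on $V(T(\po))$ either trivially or without fixed points); only the collapse in the degenerate case is specific to rank 3. The paper's argument, by contrast, stays within the 2-factor toolkit used throughout Section~\ref{sec:stg-highly} and exhibits the explicit local configurations that force the contradiction, but yields no such general statement.
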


\begin{proof}
On the contrary suppose that $T(\po)$ has an odd number of vertices, different than 1.
Whenever $|i-j|>1$, the connected components of the $(i,j)$ 2-factor of a symmetry type graph are as in Figure~\ref{4cyclequotient}. 
Hence, there is a connected component of the $(0,2)$ 2-factor of $T(\po)$ with exactly one vertex $v$ (and, hence, semi-edges of colours 0 and 2). The connectivity of $T(\po)$ implies that there is a vertex $v_1$ adjacent to $v$ in $T(\po)$. 

If $v_1$ is the only neighbour of $v$, then $T(\po)$ has the edges $(v,v_1)_1$ and $(v,v_1)_3$ as otherwise $\po$ is not fully-transitive. 
Since the connected components of the $(0,3)$ 2-factor of $T(\po)$ are as in Figure~\ref{4cyclequotient}, $v_1$ has a 0 coloured semi-edge. 
Because $T(\po)$ has more than two vertices, the edge of $v_1$ of colour 2 joins $v_1$ to another vertex, say $u$. 
But removing the edge $(v_1,u)_2$ disconnects the graph contradicting the fact that $\po$ is 2-face-transitive.

On the other hand, if $v$ has more than one neighbour it has exactly two, say $v_1$ and $u$ and $T(\po)$ has the two edges $(v,v_1)_1$ and $(v,u)_3$. This implies that the connected component of the $(1,3)$ 2-factor containing $v$ has four vertices: $v, v_1, u$ and $v_2$. (Therefore $(v_1,v_2)_3$ and $(u,v_2)_1$ are edges of $T(\po)$.) Using the $(0,3)$ 2-factor one sees that $u$ has a semi-edge of colour 0. 

Now, if $(v_1,v_2)_0$ is an edge of $T(\po)$, then the vertices $v, v_1, v_2$ and $u$ are joined to the rest of $T(\po)$ by the edges of colour 2, implying that removing them shall disconnect $T(\po)$ (there exists at least another vertex in $T(\po)$ as it has an odd number of vertices), which is again a contradiction.
On the other hand, if $v_1$ (or $v_2$) has an edge of colour 0 to a vertex $v_3$, then by Lemma~\ref{2factors4vertices} $v_2$ (or $v_1$) has a 0-edge to a vertex $v_4$. Again, if $(v_3, v_4)_1$ is an edge of $T(\po)$, since the number of vertices of the graph is odd, removing the edges of colour 2 will leave only the vertices $u,v,v_1,\dots, v_4$ in one component, which is a contradiction. Proceeding now by induction on the number of vertices one can conclude that $T(\po)$ cannot have an odd number of vertices 
\end{proof}

\section{Generators of the automorphism group of a $k$-orbit maniplex}
\label{Gen-autG}

It is well-known among polytopists that the automorphism group of a regular $n$-polytope can be generated by $n$ involutions. In fact, given a base flag $\Phi \in \fl(\poo)$, the distinguished generators of $\Aut(\po)$ with respect to $\Phi$ are involutions $\rho_0, \rho_1, \dots, \rho_{n-1}$ such that $\Phi \rho_i = \Phi^i$.

Generators for the automorphism group of a two-orbit $n$-polytope can also be given in terms of a base flag (see \cite{2-orbit}). In this section we give a set of distinguished generators (with respect to some base flag) for the automorphism group of a $k$-orbit $(n-1)$-maniplex in terms of the symmetry type graph $T(\po)$, provided that $T(\po)$ has a hamiltonian path.

Given two walks $w_{1}$ and $w_{2}$ along the edges and semi-edges of $T(\mathcal{M})$ such that the
final vertex of $w_{1}$ is the starting vertex of $w_{2}$, we define
the sequence $w_{1}w_{2}$ as the walk that traces all the edges of
$w_{1}$ and then all the edges of $w_{2}$ in the same order; the
inverse of $w_{1}$, denoted by $w_{1}^{-1}$, is the walk which has
the final vertex of $w_{1}$ as its starting vertex, and traces all
the edges of $w_{1}$ in reversed order. Since each of the elements
of $\mathrm{Mon}(\mathcal{M})$ associated to the edges of $T(\mathcal{M})$
is its own inverse, we shall forbid walks that trace the same edge
two times consecutively (or just remove the edge from such
walk, shortening its length by two). 
Given a set of walks in
$T(\mathcal{M})$, we say that a subset $\W'\subseteq \W$ is {\em a
generating set of $\W$} if each $w\in \W$ can be expressed as a sequence
of elements of $\W'$ and their inverses.
Now, let $\W$ be the set of closed walks along the edges and semi-edges
of $T(\mathcal{M})$ starting at a distinguished vertex $v_0$.
Recall that the walks along the edges and semi-edges of $T(\mathcal{M})$
correspond to permutations of the flags of $\mathcal{M}$; moreover,
each closed walk of $\W$ corresponds to an automorphism of $\mathcal{M}$.
Thus, by finding a generating set of $\W$, we will find a set of automorphisms
of $\mathcal{M}$ that generates $\Aut(\mathcal{M})$. (However, the converse is not true, as an automorphism of $\po$ may be described in more than one way as a closed walk of $T(\po)$.) Given $T(\mathcal{M})$, we may easily find such generating set. The construction
goes as follows:

Let $\mathcal{M}$ be a $k$-orbit maniplex of rank $n-1$ such that
 $\mathcal{C}=(v_{0},v_{1},v_{2},...,v_{q})$ is a walk of minimal length that visits all the vertices of $T(\ma)$.
The sets of vertices and edges (and semi-edges) of $T(\mathcal{M})$
will be denoted by $V$ and $E$, respectively. The set of edges visited
by $\mathcal{C}$ will be denoted by $E_{\mathcal{C}}$.  
In this section, the edges
joining two vertices $v_{i}$ and $v_{j}$ will be denoted by $(v_{i},v_{j})_{1}$,
$(v_{i},v_{j})_{2}$, $(v_{i},v_{j})_{3}$,...,$(v_{i},v_{j})_{h}$;
if $j=i+1$ then $(v_{i},v_{j})_{1}\in E_{\mathcal{C}}$. 
(Note that in order to not start carrying many subindices, we  modify the notation of the edges of $T(\ma)$ that we had used throughout the paper. If one wants to be consistent with the notation of the edges used in the previous sections, one would have to say that the edges between $v_i$ and $v_j$ are $(v_i,v_j)_{a_1}$, $(v_i,v_j)_{a_2}, \dots (v_i,v_j)_{a_h}$). 
Similarly,
we  denote all semi-edges incident to a vertex $v_{i}$ by $(v_{i},v_{i})_{1}$,
$(v_{i},v_{i})_{2}$, $(v_{i},v_{i})_{3}$,...,$(v_{i},v_{i})_{l}$.
For the sake of simplicity, $(v_{i},v_{j})_{1}$ will be just called
$(v_{i},v_{j})$. Let $\W$ be the set of all closed walks in $T(\mathcal{M})$
with $v_{0}$ as its starting vertex. We shall now construct $G(\W)\subseteq \W$,
a generating set of $\W$.

For each edge $(v_{i},v_{j})_{m}\in E\setminus E_{\mathcal{C}}$ we
shall define the walk 
$w_{i,j,m}=((v_{0},v_{1}),(v_{1},v_{2}),...,(v_{i-1},v_{i}),(v_{i},v_{j})_{m},(v_{j},v_{j-1}),(v_{j-1},v_{j-2}),...,(v_{1},v_{0})).$
 That is, we walk from $v_0$ to $v_i$ in $E_{\mathcal{C}}$, and then we take the edge $(v_{i},v_{j})_{m}$, and then we walk back from
$v_j$ to $v_0$ in $E_{\mathcal{C}}$.
Let $\W_{e}\subseteq \W$ be the set of all such walks.

For each semi-edge $(v_{i},v_{i})_{l}\in E\setminus E_{\mathcal{C}}$
we shall define the walk $w_{i,i,l}=((v_{0},v_{1}),(v_{1},v_{2}),...,(v_{i-1},v_{i}),(v_{i},v_{i})_{l},(v_{i},v_{i-1}),(v_{i-1},v_{i-2}),...,(v_{1},v_{0}))$.
That is, we walk from $v_0$ to $v_i$ in $E_{\mathcal{C}}$, and then we take the semi-edge $(v_{i},v_{i})_{l}$, and then we walk back from
$v_i$ to $v_0$ in $E_{\mathcal{C}}$.
Let $\W_{s}\subseteq \W$ be the set of all such walks.

We define $G(\W)=\W_{e}\cup \W_{s}$.

\begin{lemma}
\label{generatingwalks}
With the notation from above, $G(\W)$ is a generating set for $\W$.
\end{lemma}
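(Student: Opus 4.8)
The plan is to recognize this statement as the standard fact that the group of reduced closed walks of a connected graph (its ``fundamental group'') is freely generated by the chords of any spanning tree, translated into the present language of edge/semi-edge walks. First I would pin down the combinatorial structure of $\mathcal{C}$: since $T(\po)$ is assumed to have a Hamiltonian path and $\mathcal{C}$ is a walk of minimal length through all vertices, $\mathcal{C}$ must itself be a Hamiltonian path (any walk meeting all $q+1$ vertices uses at least $q$ edges, a bound attained only by a path that repeats no vertex). Hence $E_{\mathcal{C}} = \{(v_i,v_{i+1}) : 0 \le i \le q-1\}$ is a spanning tree of $T(\po)$. For each $i$ I would write $c_i := ((v_0,v_1),(v_1,v_2),\dots,(v_{i-1},v_i))$ for the initial segment of $\mathcal{C}$ from $v_0$ to $v_i$, with $c_0$ the empty walk; then by construction $w_{i,j,m} = c_i\,(v_i,v_j)_m\,c_j^{-1}$ and $w_{i,i,l} = c_i\,(v_i,v_i)_l\,c_i^{-1}$.

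Then I would take an arbitrary $w = (e_1,\dots,e_t) \in \W$, letting $v_0 = u_0, u_1, \dots, u_t = v_0$ be the vertices it visits in order, and insert telescoping round trips to write $w = g_1 g_2 \cdots g_t$ with $g_s := c_{u_{s-1}}\,e_s\,c_{u_s}^{-1}$. This is legitimate because $c_{u_0}$ and $c_{u_t}$ are empty and, at each seam between consecutive factors, the inserted $c_{u_s}^{-1} c_{u_s}$ collapses by repeated backtracking cancellation; each $g_s$ is itself a closed walk based at $v_0$, hence an element of $\W$.

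Next I would classify each $g_s$ by a short case analysis on $e_s$. If $e_s$ is the tree edge $(v_i,v_{i+1})$, then (taking the traversal away from $v_0$) $c_{u_s} = c_i\,(v_i,v_{i+1})$, so $g_s = c_i\,(v_i,v_{i+1})(v_i,v_{i+1})^{-1} c_i^{-1}$ reduces to the empty walk, since every edge is its own inverse; the opposite traversal is symmetric. If $e_s = (v_i,v_j)_m \notin E_{\mathcal{C}}$ is crossed from $v_i$ to $v_j$ then $g_s = c_i\,(v_i,v_j)_m\,c_j^{-1} = w_{i,j,m}$, while the reverse crossing gives $g_s = w_{i,j,m}^{-1}$. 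If $e_s = (v_i,v_i)_l$ is a semi-edge (necessarily outside the path $E_{\mathcal{C}}$) then $u_{s-1} = u_s = v_i$ and $g_s = c_i\,(v_i,v_i)_l\,c_i^{-1} = w_{i,i,l}$. Deleting the trivial factors then exhibits $w$ as a concatenation of elements of $\W_e \cup \W_s = G(\W)$ together with their inverses, which is exactly the definition of $G(\W)$ being a generating set for $\W$.

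The step I expect to require the most care is the bookkeeping around the backtracking convention: I must verify that each reduction invoked---collapsing $c_{u_s}^{-1} c_{u_s}$ at the seams and collapsing the tree-edge factors of the form $e_s e_s^{-1}$---is exactly one of the permitted ``same edge twice in a row'' cancellations, so that the equality $w = g_1 \cdots g_t$ really holds between reduced walks and not merely between the monodromy elements they represent. The spanning-tree observation and the case analysis are then routine.
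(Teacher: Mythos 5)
Your proof is correct and takes essentially the same approach as the paper's: both factor a closed walk into generators by splicing in round trips along $\mathcal{C}$ through $v_0$, the paper doing this one non-tree edge at a time by induction on the number of such edges visited, you doing it all at once via the telescoping decomposition and then classifying the factors. Your version additionally makes explicit two points the paper leaves implicit---that minimality forces $\mathcal{C}$ to be a Hamiltonian path (so $E_{\mathcal{C}}$ is a spanning tree), and that every cancellation used is an instance of the permitted ``same edge twice in a row'' reduction.
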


\begin{proof}
We shall prove that any $w\in \W$ can be expressed
as a sequence of elements of $G(\W)$ and their inverses. Let $w\in \W$
be a closed walk among the edges and semi-edges of $T(\mathcal{M})$ starting at $v_0$.
From now on, semi-edges will be referred to simply as ``edges''.

We shall proceed by induction over $n$, the number of edges in $E\setminus E_{\mathcal{C}}$
visited by $w$. If $w$ visits only one edge in $E\setminus E_{\mathcal{C}}$,
then $w\in G(\W)$ or $w^{-1}\in G(\W)$. Let us suppose that,
if a closed walk among the edges of $T(\mathcal{M})$
visits $m$ different edges in $E\setminus E_{\mathcal{C}}$, with
$m<n$, then it can be expressed as a sequence of elements of $G(\W)$
and their inverses.

Let $w\in \W$ be a walk that visits exactly $n$ edges in $E\setminus E_{\mathcal{C}}$.
Let $(v_{a},v_{b})_{l}\in E\setminus E_{\mathcal{C}}$ be the last 
edge of $E\setminus E_{\mathcal{C}}$
visited by $w$. 
Without loss of generality we may assume that the vertex
$v_{b}$ was visited after $v_{a}$, so let $(v_c, v_a)_m$ be the edge that $w$ visits just before $(v_a,v_b)_l$ (note that $(v_c, v_a)_m$ may or may not be in $E_{\mathcal{C}}$). 
Let $w_{1}\in \W$ be the closed walk that
traces the same edges (in the same order) as $w$ until reaching
$(v_{c},v_{a})_{m}$ and then traces the edges $(v_{a},v_{a-1})$,
$(v_{a-1},v_{a-2})$, ...,$(v_{1},v_{0})$, and let $w_{2}\in \W$
be the closed walk  
that traces the edges $(v_{0},v_{1}),(v_{1},v_{2}),...,(v_{a-1},v_{a})$ and then traces $(v_a,v_b)_l$ and continues the way $w$ does to return to $v_0$.
It is clear that $w_{1}$ visits exactly $n-1$ edges in $E\setminus E_{\mathcal{C}}$
and that $w_{2}$ visits only one. 
By inductive hypothesis both $w_{1}$
and $w_{2}$ can be expressed as a sequence of elements of $G(\W)$,
and therefore so does $w$ since $w=w_{1}w_{2}$.
\end{proof}

Let $\Phi$ be a base flag of $\po$ that projects to the initial vertex of a walk that contains all vertices of $T(\ma)$ of a symmetry type graph.
 Following the notation of \cite{d-auto}, given $w \in \Mon(\po)$ such that $\Phi^w$ is in the same orbit as $\Phi$ (that is, $ w \in \mathrm{Norm(Stab (\Phi))}$), we denote by $\alpha_w$ the automorphism taking $\Phi$ to $\Phi^w$. Moreover, if $w=r_{i_1}r_{i_2}\dots r_{i_k}$ for some $i_1, \dots i_k \in \{0, \dots, n-1\}$, then we may also denote $\alpha_w$ by $\alpha_{i_1,i_2, \dots i_k}$.
 
The following theorem gives distinguished generators (with respect to some base flag) of the automorphism group of a maniplex $\ma$  in terms of a distinguished walk of $T(\po)$, that travels through all the vertices of $T(\po)$. Its proof is a consequence of the previous lemma.
 
 \begin{theorem}
 \label{auto}
 Let $\po$ be a $k$-orbit $n$-maniplex and let $T(\po)$ its symmetry type graph. 
 Suppose that  $v_1, e_1, v_2, e_2 \dots, e_{q-1}, v_q$ is a distinguished walk that visits every vertex of $T(\ma)$, with the edge $e_i$ having colour $a_i$, for each $i= 1, \dots q-1$.
 Let $S_i \subset \{0, \dots, n-1\}$ be such that $v_i$ has a semi-edge of colour $s$ if and only if $s \in S_i$. 
 Let $B_{i,j}\subset \{0, \dots, n-1\}$ be the set of colours of the edges between the vertices $v_i$ and $v_j$ (with $i<j$) that are not in the distinguished walk 
 and
  let $\Phi \in \fl(\ma)$ be a base flag of $\po$ such that $\Phi$ projects to $v_1$ in $T(\po)$.
 Then, the automorphism group of $\ma$ is generated by the union of the sets
 $$\{\alpha_{a_1, a_2, \dots, a_i, s, a_i, a_{i-1}, \dots, a_1} \mid i=1, \dots, k-1, s \in S_i \},$$
 and
 $$\{ \alpha_{a_1, a_2, \dots, a_i, b, a_j, a_{j-1}, \dots, a_1} \mid i,j \in \{1, \dots, k-1\}, i<j, b \in B_{i,j} \}.$$
 \end{theorem}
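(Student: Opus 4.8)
The plan is to translate Lemma~\ref{generatingwalks}, which is a statement about the set $\W$ of closed walks, into a statement about $\Aut(\po)$ via the dictionary between closed walks based at the image of the base flag and automorphisms of $\po$. Throughout I identify the base vertex $v_0$ of the spanning walk $\mathcal{C}$ in Lemma~\ref{generatingwalks} with $v_1 = \psi(\Phi)$, and I take $\mathcal{C}$ to be the given distinguished (hamiltonian) walk $v_1, e_1, \dots, v_q$, so that $E_{\mathcal{C}}$ consists of the path edges $e_1,\dots,e_{q-1}$ of colours $a_1,\dots,a_{q-1}$.

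First I would make the dictionary precise. Reading off the colours traversed by a walk $w \in \W$ produces a word in the distinguished generators of $\Mon(\po)$, hence an element again denoted $w$; since $w$ is a closed walk at $v_1$ we have $v_1^{\,w}=v_1$, which by the definition of the action of $\Mon(\po)$ on $T(\po)$ means exactly that $\Phi^w$ lies in the $\Aut(\po)$-orbit of $\Phi$, i.e. $w \in \mathrm{Norm}(\mathrm{Stab}(\Phi))$. Because $\Aut(\po)$ acts freely and transitively on the orbit of $\Phi$, there is a unique automorphism $\alpha_w$ with $\Phi\alpha_w = \Phi^w$, so $w \mapsto \alpha_w$ is well defined on $\W$. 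I would then check surjectivity onto $\Aut(\po)$: given $\beta \in \Aut(\po)$, the flag $\Phi\beta$ lies in the orbit of $\Phi$, and since $\Mon(\po)$ is transitive on flags there is $w$ with $\Phi^w = \Phi\beta$; this $w$ fixes $v_1$, hence lies in $\W$, and $\alpha_w = \beta$ by freeness.

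Next I would record how the assignment interacts with concatenation. Automorphisms preserve every $i$-adjacency, so each $\varphi\in\Aut(\po)$ commutes with every $r_i$ and hence with all of $\Mon(\po)$, giving $(\Phi^u)\varphi = (\Phi\varphi)^u$ for all flags and all $u\in\Mon(\po)$. Using this, $\Phi(\alpha_{w_1}\alpha_{w_2}) = (\Phi^{w_1})\alpha_{w_2} = (\Phi\alpha_{w_2})^{w_1} = \Phi^{w_2w_1}$, so $\alpha_{w_1}\alpha_{w_2} = \alpha_{w_2 w_1}$; that is, $w\mapsto\alpha_w$ is an anti-homomorphism onto $\Aut(\po)$ (equivalently $w \mapsto \alpha_w^{-1}=\alpha_{w^{-1}}$ is a genuine homomorphism). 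Consequently the image of any generating set of $\W$, in the sense of Lemma~\ref{generatingwalks}, is a generating set of $\Aut(\po)$, since under an anti-homomorphism a product of generators and their inverses maps to a product of the images and their inverses, merely reordered.

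Finally I would apply this to the explicit set $G(\W) = \W_e \cup \W_s$ and read off colours. A semi-edge walk in $\W_s$ runs from $v_1$ to $v_i$ along $\mathcal{C}$, crosses the semi-edge of colour $s\in S_i$, and returns, so its colour word has the form $a_1,\dots,a_{i-1},s,a_{i-1},\dots,a_1$ and $\alpha$ of it is the corresponding element of the first displayed family; an extra-edge walk in $\W_e$ runs from $v_1$ to $v_i$, crosses the edge of colour $b\in B_{i,j}$ to $v_j$, and returns along $\mathcal{C}$ from $v_j$, giving $a_1,\dots,a_{i-1},b,a_{j-1},\dots,a_1$ and the corresponding element of the second family (here the indices of the $a$'s follow the convention that reaching $v_i$ uses the first $i-1$ path colours). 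Hence the union of the two families generates $\Aut(\po)$. I expect the main obstacle to be the bookkeeping around the commuting relation: establishing $\alpha_{w_1}\alpha_{w_2}=\alpha_{w_2w_1}$ cleanly and confirming that an anti-homomorphism carries generating sets to generating sets, since everything else is either the surjectivity argument above or a direct transcription of Lemma~\ref{generatingwalks}.
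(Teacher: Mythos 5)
Your proposal is correct and takes essentially the same route as the paper: the paper deduces Theorem~\ref{auto} directly from Lemma~\ref{generatingwalks} via the correspondence between closed walks of $T(\po)$ based at the image of the base flag and elements of $\Aut(\po)$, which is precisely your dictionary $w \mapsto \alpha_w$. You simply make explicit the details the paper leaves implicit (well-definedness via the free action, surjectivity via transitivity of $\Mon(\po)$, and the fact that $w\mapsto\alpha_w$ is an anti-homomorphism, so a generating set of $\W$ maps to a generating set of $\Aut(\po)$), and you handle the statement's off-by-one indexing of the $S_i$ and $B_{i,j}$ in the only sensible way.
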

 
We note that, in general, a set of generators of $\Aut(\po)$ obtained from Theorem~\ref{auto} can be reduced since
there might be more than one element of $G(\W)$ representing the same automorphism.
For example, the closed walk $w$ through an edge of colour $2$, then a $0$-semi-edge and finally a 2-edge corresponds to the element $r_2r_0r_2 = r_0$ of $\Mon(\ma)$. Hence, the group generator induced by the walk $w$ is the same as that induced by the closed walk consisting only of the semi-edge of colour $0$.

The following two corollaries give a set of generators for 2- and 3-orbit polytopes, respectively, in a given class. The notation follows that of Theorem~\ref{auto}, where if the indices of some $\alpha$ do not fit into the parameters of the set, we understand that such automorphism is the identity.

\begin{corollary}
{\bf \cite{tesisisa}}
Let $\po$ be a 2-orbit $(n-1)$-maniplex in class $2_I$, for some $I \subset \{0, \dots, n-1\}$ and let $j_0 \notin I$. Then
$$ \big\{ \alpha_i, \alpha_{j_0, i, j_0},  \alpha_{k,j_0} \mid  i \in I,  \ k \notin I \big\}$$
is a generating set for $\Aut(\po)$.

\end{corollary}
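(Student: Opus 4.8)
The plan is to obtain this statement directly by specializing Theorem~\ref{auto} to the symmetry type graph of a class $2_I$ maniplex; essentially all of the work is already done by that theorem, and what remains is to describe the graph explicitly and read off the two families of generators. First I would recall (from the discussion around Figure~\ref{2orbitSTG}) that for $\po$ in class $2_I$ the symmetry type graph $T(\po)$ has exactly two vertices, say $v_1$ and $v_2$, with a semi-edge of colour $s$ at each vertex precisely when $s \in I$, and with a single edge of colour $k$ joining $v_1$ and $v_2$ precisely when $k \notin I$. In particular, in the notation of Theorem~\ref{auto}, the semi-edge colour sets are $S_1 = S_2 = I$.

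Next I would fix the distinguished walk required by Theorem~\ref{auto}. Since $j_0 \notin I$, there is an edge of colour $j_0$ between $v_1$ and $v_2$; as $T(\po)$ has only two vertices, the one-edge walk $v_1, (v_1,v_2)_{j_0}, v_2$ already visits every vertex, so it is an admissible distinguished walk (in particular $T(\po)$ trivially has a Hamiltonian path) with colour $a_1 = j_0$. Choosing a base flag $\Phi$ that projects to $v_1$, the set of non-walk edge colours between $v_1$ and $v_2$ is $B_{1,2} = \{k : k \notin I,\ k \neq j_0\}$.

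I would then read off the two families of generators supplied by Theorem~\ref{auto}. The semi-edges at the base vertex $v_1$ contribute the automorphisms $\alpha_i$ for $i \in I$; the semi-edges at $v_2$, reached by first crossing the edge of colour $j_0$ and returning along it, contribute $\alpha_{j_0, i, j_0}$ for $i \in I$; and each edge of $B_{1,2}$ contributes $\alpha_{k, j_0}$ for $k \notin I$ with $k \neq j_0$. By Theorem~\ref{auto} the union of these sets generates $\Aut(\po)$.

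Finally I would reconcile this with the statement as written, which lets $k$ range over all of $\{0,\dots,n-1\} \setminus I$ rather than excluding $j_0$. The only additional element thereby introduced is $\alpha_{j_0, j_0}$, and since $r_{j_0}$ is an involution in $\Mon(\po)$ we have $r_{j_0} r_{j_0} = 1$, so $\alpha_{j_0, j_0}$ is the identity and contributes nothing to the generated group. Hence the displayed set is a generating set for $\Aut(\po)$. I expect no genuine obstacle here: the entire content is carried by Theorem~\ref{auto}, and the only points requiring attention are verifying that the chosen one-edge walk is legitimate (immediate, since $j_0 \notin I$ guarantees the required edge and two vertices need only one edge to be connected) and observing that the boundary index $k = j_0$ degenerates to the identity.
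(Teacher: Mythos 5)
Your proposal is correct and follows essentially the same route as the paper: the paper presents this corollary as a direct specialization of Theorem~\ref{auto} to the two-vertex symmetry type graph of class $2_I$ (semi-edges of colours in $I$ at both vertices, edges of the remaining colours between them), using the single edge of colour $j_0$ as the distinguished walk, with the degenerate element $\alpha_{j_0,j_0}$ handled by the paper's stated convention that out-of-range indices yield the identity. Your explicit verification that $\alpha_{j_0,j_0}=\mathrm{id}$ via $r_{j_0}^2=1$ is exactly what that convention encodes.
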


\begin{corollary}
Let $\po$ be a 3-orbit $(n-1)$-maniplex.
\begin{enumerate}
\item If $\po$ is in class $3^{i}$, for some $i \in \{1, \dots, n-2\}$, then 
$$ \big\{ \alpha_j, \alpha_{i,i-1,i+1,i},  \alpha_{i,i+1,i+2,i+1,i},  \alpha_{i,i+1,i,i+1,i} \mid  j \in \{0, \dots, n-1\} \setminus \{i\} \big\} $$
is a generating set for $\Aut(\po)$.
\item If $\po$ is in class $3^{i,i+1}$, for some $i \in  \{0, \dots, n-2\}$, then 
$$\big\{ \alpha_j, \alpha_{i,i-1,i}, \alpha_{i,i+1,i+2,i+1,i},  \alpha_{i,i+1,i,i+1,i} \mid  j \in \{0, \dots, n-1\} \setminus \{i\} \big\}$$
is a generating set for $\Aut(\po)$.
\end{enumerate}
\end{corollary}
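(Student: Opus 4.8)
The plan is to apply Theorem~\ref{auto} to each of the two classes, using the natural Hamiltonian path on the three vertices, and then to reduce the resulting generating set with the defining relations of $\Mon(\po)$, namely $r_a^2=1$ and $r_ar_b=r_br_a$ whenever $|a-b|\geq 2$. Since $\alpha_w$ is the unique automorphism sending $\Phi$ to $\Phi^w$, it depends only on the element $w\in\Mon(\po)$; hence any two closed walks representing the same monodromy element yield the same automorphism, and whenever a reduction of a retained closed walk collapses to a shorter word $w'$, the flag $\Phi^{w'}$ still lies in the orbit of $\Phi$, so $\alpha_{w'}$ is defined. This is exactly what licenses the reductions below.

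First I would record the shape of $T(\po)$ in each class, labelling the vertices $v_1,v_2,v_3$. In class $3^{i}$ the edges are $(v_1,v_2)_i$ together with the parallel pair $(v_2,v_3)_{i-1}$ and $(v_2,v_3)_{i+1}$; consequently $v_1$ carries a semi-edge of every colour except $i$, the vertex $v_2$ of every colour except $i-1,i,i+1$, and $v_3$ of every colour except $i-1,i+1$. In class $3^{i,i+1}$ the only edges are $(v_1,v_2)_i$ and $(v_2,v_3)_{i+1}$, so $v_1$ misses a semi-edge only at $i$, the vertex $v_2$ only at $i,i+1$, and $v_3$ only at $i+1$. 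In both cases I take the distinguished walk $v_1,v_2,v_3$ that uses the edge of colour $i$ from $v_1$ to $v_2$ and the edge of colour $i+1$ from $v_2$ to $v_3$, so that $a_1=i$ and $a_2=i+1$, and I choose $\Phi$ projecting to $v_1$. Theorem~\ref{auto} then produces the semi-edge generators $\alpha_s$ at $v_1$, $\alpha_{i,s,i}$ at $v_2$, and $\alpha_{i,i+1,s,i+1,i}$ at $v_3$ (one for each admissible semi-edge colour $s$), together with, in class $3^{i}$ only, the single non-path-edge generator $\alpha_{i,i-1,i+1,i}$ coming from $B_{2,3}=\{i-1\}$.

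The reduction step is where the real bookkeeping lies. The semi-edge generators at $v_1$ are exactly $\{\alpha_j\mid j\neq i\}$, the first family in each statement. At $v_2$, any admissible $s$ with $|s-i|\geq 2$ gives $r_ir_sr_i=r_s$, so $\alpha_{i,s,i}=\alpha_s$ is already present; in class $3^{i}$ these are all the admissible colours, whereas in class $3^{i,i+1}$ the colour $s=i-1$ is also admissible and yields the genuinely new $\alpha_{i,i-1,i}$. At $v_3$, whenever $s\notin\{i-1,i,i+1,i+2\}$ the element $r_s$ commutes with both $r_i$ and $r_{i+1}$, collapsing $\alpha_{i,i+1,s,i+1,i}$ to $\alpha_s$; the value $s=i-1$ (admissible only in class $3^{i,i+1}$) collapses via $r_{i-1}r_{i+1}=r_{i+1}r_{i-1}$ to the already-listed $\alpha_{i,i-1,i}$; and only $s=i$ and $s=i+2$ survive, giving $\alpha_{i,i+1,i,i+1,i}$ and $\alpha_{i,i+1,i+2,i+1,i}$. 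Collecting the survivors reproduces precisely the two sets in the statement. The main obstacle is not any single computation but keeping straight which colours are admissible semi-edges at each vertex in each class and checking that every collapse lands on a generator already retained; the convention that an $\alpha$ with out-of-range indices denotes the identity disposes of the boundary cases $i=1$ and $i=n-2$ (for instance when $i+2>n-1$).
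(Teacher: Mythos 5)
Your proposal is correct and follows exactly the route the paper intends: the corollary is a direct application of Theorem~\ref{auto} to the Hamiltonian path $v_1\xrightarrow{i}v_2\xrightarrow{i+1}v_3$ in each class, followed by the reduction of redundant generators via the relations $r_ar_b=r_br_a$ for $|a-b|\geq 2$ (the very reduction the paper flags in the remark after Theorem~\ref{auto}, e.g.\ $r_2r_0r_2=r_0$). Your bookkeeping of the semi-edge sets $S_1,S_2,S_3$, the edge set $B_{2,3}=\{i-1\}$ in class $3^i$, the collapse $\alpha_{i,i+1,i-1,i+1,i}=\alpha_{i,i-1,i}$ in class $3^{i,i+1}$, and the boundary convention for out-of-range indices all check out.
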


\section{Oriented and orientable maniplexes}
\label{sec:orient}

A maniplex $\po$ is said to be {\em orientable} if its flag graph $\gr$ is a bipartite graph. 
Since a subgraph of a bipartite graph is also bipartite, all the sections of an orientable maniplex are orientable maniplexes themselves.
An {\em orientation} of an orientable maniplex is a colouring of the parts of $\gr$, with exactly two colours, say black and white. An {\em oriented maniplex} is an orientable maniplex with a given orientation.
Note that any oriented maniplex $\po$ has an enantiomorphic maniplex (or mirror image) $\po^{en}$. One can think of the enantiomorphic form of an oriented maniplex simply as the orientable maniplex with the opposite orientation. 

If the connection groups $\Mon(\po)$ of $\po$ is generated by $r_0, r_1, \dots, r_{n-1}$,
for each $i \in \{0, \dots, n-2\}$ let us define the element $t_i:= r_{n-1}r_i \in \Mon(\po)$. Then, $t_i^2=1$, for $i=0, \dots n-3$. The subgroup $\Mon^+(\po)$ of $\Mon(\po)$ generated by $t_0, \dots t_{n-2}$ is called {\em even connection group of $\po$}. Note that $\Mon^+(\po)$ has index at most two in $\Mon(\po)$. 
In fact $(\Mon^+(\po))^{r_{n-1}} = \Mon^+(\po^{en})$.
It should be clear then that any maniplex and its enantiomorphic form are in fact isomorphic as maniplexes.

An {\em oriented flag di-graph} $\gr^+$ of an oriented maniplex $\po$ is constructed in the following way. 
The vertex set of $\gr^+$ consists of one of the parts of the bipartition of $\gr$. That is, the black (or white) vertices of the flag graph of $\po$.  The darts of $\gr^+$ will be the 2-arcs of $\gr$ of colours $n-1,i$, for each $i \in\{0,\dots, n-2\}$. We then identify two darts to obtain an edge if they have the same vertices, but go in opposite directions. 
Note that for $i = 0, \dots, n-3$ and each flag $\Phi$ of $\po$, the 2-arc starting at $\Phi$ and with edges coloured $n-1$ and $i$ has the same end vertex than the 2-arc starting at $\Phi$ and with edges coloured $i$ and $n-1$.
Hence,  all the darts corresponding to 2-arcs of colours ${n-1}$ and $i$, with $i=0, \dots n-3$ will have both directions in $\gr^+$ giving us, at each vertex, $n-2$ different edges. 
On the other hand, the 2-arcs on edges of two colours ${n-1}, {n-2}$ will in general be directed darts of $\gr^+$. An example of an oriented flag di-graph is shown in Firgure~\ref{orientedgraphflag}. 
We note that the oriented flag di-graph of $\po^{en}$ can be obtained from $\gr^+$ by reversing the directions of the $n-2, n-1$ darts.

\begin{figure}[htbp]
\begin{center}
\includegraphics[width=10cm]{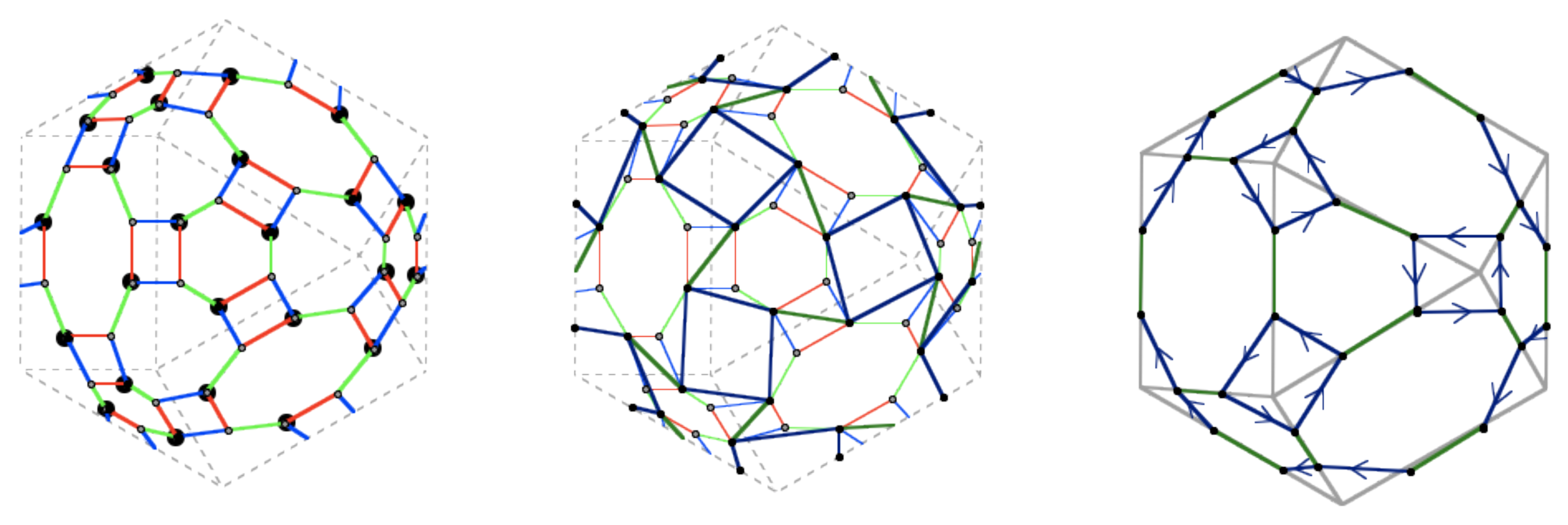}
\caption{The oriented flag di-graph of an oriented cuboctahedron from its flag graph.}
\label{orientedgraphflag}
\end{center}
\end{figure}

Note that the 2-arcs of colours $r_{n-1}, r_i$ correspond to the generators $t_i$ of $\Mon^+(\po)$. In fact, as $\Mon^+(\po)$ consists precisely of the even words of $\Mon(\po)$, a maniplex is orientable if and only if the index of $\Mon^+(\po)$ in $\Mon(\po)$ is exactly two. 
We can then colour the edges and darts of $\gr^+$ with the elements $t_i$.
The fact that $t_i^2=1$ for every $i=0, \dots, n-3$ indeed implies that the edges of $\gr^+$ are labelled by these first $n-2$ elements, while the darts are labelled by $t_{n-2}$.

We can see now that for each $i \in \{0, \dots, n-2\}$, the $i$-faces of $\po$ are in correspondence with the connected components of the subgraph of $\gr^+$ with edges of colours $\{0, \dots, n-2\}\setminus \{i\}$. To identify the facets of $\po$ as subgraphs of $\gr^+$, we first consider some oriented paths on the edges of $\gr^+$. We shall say that an oriented path on the edges of $\gr^+$ is {\em facet-admissible}
if no two darts of colour $t_{n-2}$ are consecutive on the path. Then, two vertices of $\gr^+$ are in the same facet of $\po$ if there exists a facet admissible oriented path from one of the vertices to the other.

For the remainder of this section, by a maniplex we shall mean an oriented maniplex, with one part of the flags coloured with black and the other one in white.

An {\em orientation preserving automorphism} of an (oriented) maniplex $\po$  is an automorphism of $\po$ that sends black flags to black flags and white flags to white flags. An {\em orientation reversing automorphism} is an automorphism that interchanges black and white flags.
 A {\em reflection} is an orientation reversing involutory automorphism. The group of orientation preserving automorphisms of $\po$ shall be denoted by $\Aut^+(\po)$. 
 
 The orientation preserving automorphism $\Aut^+(\po)$ of a maniplex $\po$ is a subgroup of index at most two in $\Aut(\po)$. In fact, the index is exactly two if and only if $\Aut(\po)$ contains an orientation reversing automorphism. Note that in this case, there exists an orientation reversing automorphism that sends $\po$ to its enantiomorphic form $\po^{en}$.

Pisanski~\cite{tomo} defines a maniplex to be {\em chiral-a-la-Conway} if $\Aut^+(\po) = \Aut(\po)$. 
If a maniplex $\po$ is chiral-a-la-Conway, then its enantiomorphic maniplex $\po^{en}$ is isomorphic to $\po$, but there is no automorphism of the maniplex sending one to the other. 
 It follows from the definition that $\po$ is chiral-a-la-Conway if and only if the automorphisms of $\po$ preserve the bipartition of $\gr$ and therefore we have the following proposition.
 
 \begin{proposition}
 \label{oddcycles}
Let $\po$ be an oriented maniplex and let $T(\po)$ its symmetry type graph. Then, $\po$ is chiral-a-la-Conway if and only if $T(\po)$ has no odd cycles.
\end{proposition}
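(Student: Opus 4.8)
The plan is to translate \emph{chiral-a-la-Conway} into a purely combinatorial statement about flag orbits and then compare the two canonical $2$-colourings in play. Since $\po$ is orientable, $\gr$ is bipartite; write $c\colon V(\gr)\to\{0,1\}$ for its proper $2$-colouring into black and white flags, so that every edge of $\gr$ joins a black flag to a white one. As observed just before the proposition, $\po$ is chiral-a-la-Conway if and only if every automorphism preserves this bipartition. First I would record the reformulation: \emph{$\po$ is chiral-a-la-Conway if and only if every $\Aut(\po)$-orbit of flags is monochromatic}, i.e.\ contained in a single part of the bipartition. One direction is immediate, since a colour-preserving automorphism keeps each flag in its part; for the converse, if all orbits are monochromatic then for any $\alpha\in\Aut(\po)$ and any flag $\Phi$ the flags $\Phi$ and $\Phi\alpha$ lie in a common orbit and hence share a colour, so $\alpha$ is orientation preserving and $\Aut^+(\po)=\Aut(\po)$.

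Assuming $\po$ is chiral-a-la-Conway, I would build an explicit proper $2$-colouring of $T(\po)$. Since every orbit is monochromatic, $c$ is constant on each fibre of $\psi\colon V(\gr)\to V(T(\po))$, so it descends to a map $\bar c\colon V(T(\po))\to\{0,1\}$ with $\bar c(\psi(\Phi))=c(\Phi)$. By the definition of the coloured quotient, every proper edge of $T(\po)$ is the image of an edge of $\gr$, whose endpoints have different $c$-values and hence whose images have different $\bar c$-values; thus $\bar c$ properly $2$-colours the proper edges. Moreover $T(\po)$ can have no semi-edges: a semi-edge at a vertex $v$ would arise from an edge of $\gr$ with both endpoints in $\psi^{-1}(v)$, i.e.\ two flags of the same colour joined by an edge of $\gr$, contradicting bipartiteness. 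Hence $T(\po)$ is bipartite and has no odd cycles, a semi-edge being counted as an odd cycle of length one.

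For the contrapositive of the remaining direction I would suppose $\po$ is not chiral-a-la-Conway, so by the reformulation some orbit is not monochromatic: there are flags $\Phi$ and $\Psi$ in a common $\Aut(\po)$-orbit with $c(\Phi)\neq c(\Psi)$. By connectedness of $\gr$ there is a walk $\rho$ from $\Phi$ to $\Psi$, and since $\gr$ is bipartite with endpoints of different colours, $\rho$ has odd length. Projecting $\rho$ through $\psi$ edge by edge yields a closed walk in $T(\po)$ based at $\psi(\Phi)=\psi(\Psi)$ of the same odd length, each step being a proper edge or a semi-edge. If some step is a semi-edge we are done, as a semi-edge is an odd cycle; otherwise the projection is an odd closed walk in the underlying simple graph, which contains an odd cycle by the standard fact that a graph admitting an odd closed walk is non-bipartite. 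Either way $T(\po)$ has an odd cycle.

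The only delicate point, and the step I expect to be the main obstacle, is the bookkeeping of semi-edges: one must fix (and use consistently) the convention that in a pregraph a semi-edge is itself an odd cycle, which is precisely what makes the equivalence tight, since a single black--white orbit forces either a semi-edge or an odd cycle in $T(\po)$, while bipartiteness of $T(\po)$ forbids both. Everything else reduces to the parity of walk-lengths in the bipartite graph $\gr$ together with the length-preserving nature of the quotient map $\psi$.
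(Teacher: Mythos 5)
Your proof is correct and follows the route the paper takes implicitly: the paper gives no written argument, merely observing that chiral-a-la-Conway means every automorphism preserves the bipartition of $\gr$, which is exactly your reformulation via monochromatic orbits, after which the equivalence follows (as you do it) by descending the $2$-colouring to $T(\po)$ in one direction and projecting an odd walk of $\gr$ through $\psi$ in the other. Your explicit handling of the convention that a semi-edge counts as an odd cycle is a genuine point the paper glosses over, and it is essential for the ``if'' direction -- e.g.\ a reflexible orientable maniplex has a symmetry type graph consisting of one vertex with only semi-edges, yet is not chiral-a-la-Conway -- so your treatment of it is exactly right.
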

 
 Similarly as before, the orientation preserving automorphisms of a maniplex $\po$ correspond to colour preserving automorphism of the bipartite graph $\gr$ that preserves the two parts. But these correspond to colour preserving automorphisms of the di-graph $\gr^+$, implying that $\Aut^+(\po) \cong \Aut_p(\gr^+)$.
Note that the action of $\Aut^+(\po)$ on the set $\mathcal{B(\po)}$ of all the black flags of $\po$ is semiregular, and 
hence, the action on $\Aut_p(\gr^+)$ is semiregular on the vertices of $\gr^+$.

An oriented maniplex $\po$ is said to be {\em rotary (or orientably regular)} if the action of $\Aut^+(\po)$ is regular on $\bl(\po)$. 
Equivalently, $\po$ is rotary if the action of $\Aut_p(\gr^+)$ is regular on its vertices.  
We say that $\po$ is {\em orientably $k$-orbit} if the action of $\Aut_p(\gr^+)$ has exactly $k$ orbits on the vertices of $\gr^+$. The following lemma is straightforward.

\begin{lemma}
\label{orientablekorbit}
Let $\po$ be a chiral-a-la-Conway maniplex. Then $T(\po)$ has no semi-edges and if $\po$ is an orientably $k$-orbit maniplex, then $\po$ is a $2k$-orbit maniplex.
\end{lemma}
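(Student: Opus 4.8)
The plan is to prove the two assertions of Lemma~\ref{orientablekorbit} separately, exploiting the characterization of chiral-a-la-Conway maniplexes given in Proposition~\ref{oddcycles}, namely that $\po$ is chiral-a-la-Conway if and only if $T(\po)$ has no odd cycles. The key structural fact to keep in mind throughout is that for a chiral-a-la-Conway maniplex, the flag graph $\gr$ is bipartite and every automorphism of $\po$ preserves the bipartition; equivalently, $\Aut(\po) = \Aut^+(\po)$ acts on the black flags $\bl(\po)$ and on the white flags, and the symmetry type graph $T(\po)$, being a quotient of a bipartite graph by a bipartition-preserving group, inherits a natural $2$-colouring of its vertices.

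First I would dispose of the ``no semi-edges'' claim. A semi-edge of colour $i$ at a vertex $v$ of $T(\po)$ arises precisely when some flag $\Phi$ with $\psi(\Phi)=v$ has its $i$-adjacent flag $\Phi^i$ lying in the same $\Aut(\po)$-orbit as $\Phi$. But a single coloured edge is an odd cycle (a cycle of length one) in the pregraph $T(\po)$, so by Proposition~\ref{oddcycles} a chiral-a-la-Conway maniplex cannot have one. More concretely and without appealing to the loop-as-odd-cycle convention, a semi-edge would mean $\Phi$ and $\Phi^i$ are in the same orbit; since $\Phi$ and $\Phi^i$ are $i$-adjacent they lie in opposite parts of the bipartition of $\gr$, so an automorphism carrying one to the other would reverse the bipartition, contradicting that $\Aut(\po)$ preserves it. Either route gives that $T(\po)$ has no semi-edges.

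Next I would establish the orbit-counting statement. Assume $\po$ is orientably $k$-orbit, so $\Aut^+(\po) = \Aut(\po)$ has exactly $k$ orbits on the black vertices $\bl(\po)$ of $\gr^+$ (equivalently on the black flags of $\gr$). The plan is to show that the white flags also split into exactly $k$ orbits and that black and white orbits are disjoint, yielding $2k$ flag orbits in total. Disjointness is immediate from bipartition-preservation: no automorphism sends a black flag to a white one, so no orbit mixes the two colours. To count the white orbits, fix any fixed colour, say $i$, and use the $i$-adjacency map $\Phi \mapsto \Phi^i$, which is a bijection from black flags to white flags. By Lemma~\ref{orbitTOorbit} applied with $a = r_i \in \Mon(\po)$, two black flags $\Phi,\Psi$ lie in the same $\Aut(\po)$-orbit if and only if $\Phi^i,\Psi^i$ do; hence this bijection induces a bijection between the $k$ black orbits and the white orbits, so there are exactly $k$ of the latter. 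Therefore $\Aut(\po)$ has $k + k = 2k$ orbits on $\fl(\po)$, i.e. $\po$ is a $2k$-orbit maniplex.

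I expect the main subtlety to be the clean justification that black and white orbits never coincide and that the colour-$i$ adjacency gives a genuine orbit-to-orbit bijection rather than merely a surjection; this is exactly where bipartition-preservation (Proposition~\ref{oddcycles}) and Lemma~\ref{orbitTOorbit} do the essential work, and it is worth stating that the count is independent of the choice of $i$. Everything else is bookkeeping: the correspondence between black vertices of $\gr^+$ and black flags of $\gr$, and the fact that orbits of $\Aut(\po)$ on $\fl(\po)$ correspond to vertices of $T(\po)$. No delicate estimate or induction is needed, so this should be a short argument once the bipartition structure is invoked.
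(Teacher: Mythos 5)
Your proof is correct: the paper itself offers no argument for this lemma (it is dismissed as ``straightforward''), and your two steps---ruling out semi-edges because an automorphism carrying $\Phi$ to $\Phi^i$ would swap the parts of the bipartition of $\gr$, contradicting $\Aut(\po)=\Aut^+(\po)$, and then using the bijection $\Phi\mapsto\Phi^i$ together with Lemma~\ref{orbitTOorbit} to pair the $k$ black orbits with $k$ white orbits---are exactly the intended routine verification. Your hedge about whether a semi-edge counts as an ``odd cycle'' in Proposition~\ref{oddcycles} is well taken; the direct bipartition argument you give is the clean way to avoid that convention issue.
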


\subsection{Oriented symmetry type di-graphs of oriented maniplexes}

We now consider the semiregular action of $\Aut^+(\po)$ on the vertices of $\gr^+$, and let ${\mathcal B}= {\mathcal O}rb^+$ be the partition of the vertex set of $\gr^+$ into the orbits with respect to the action of $\Aut^+(\po)$.  (As before, since the action is semiregular, all orbits are of the same size.) The {\em oriented symmetry type di-graph} $T^+(\po)$ of $\po$ is the quotient colour di-graph with respect to ${\mathcal O}rb^+$. Similarly as before, if $\po$ is rotary, then the oriented symmetry type di-graph of $\po$ consists of one vertex with one loop and $n-2$ semi-edges. 
Note that for oriented symmetry type di-graphs we shall not identify two darts with the same vertices, but different directions.

If we now turn our attention to oriented symmetry type di-graphs with two vertices, one can see that for each $I\subset \{0, \dots, n-2\}$, there is an oriented symmetry type di-graphs with two vertices having semi-edges (or loops) of colours $i$ at each vertex for every $i \in I$, and having edges (or both darts) of colour $j$, for each $j \notin I$. An oriented maniplex with such oriented symmetry type di-graph shall be say to be in class $2_I^+$. Hence, there are $2^{n-2}-1$ classes of oriented 2-orbit $(n-1)$-maniplexes.

Note that if $\ma$ is a $k$-orbit maniplex, then $T^+(\po)$ has either $k$ or $\frac{k}{2}$ vertices.
The next result follows from Proposition~\ref{oddcycles} and Lemma~\ref{orientablekorbit}.

\begin{theorem}
Let $\po$ be an oriented maniplex. Then, $T(\po)$ and $T^+(\po)$ have the same number of vertices if and only if $T(\po)$ has a semi-edge or an odd cycle.
\end{theorem}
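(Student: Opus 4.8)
The plan is to establish the biconditional by relating the vertex count of $T(\po)$ and $T^+(\po)$ through the two quantities that control the index of $\Aut^+(\po)$ in $\Aut(\po)$. The key facts I would assemble are the following. First, $T^+(\po)$ is the quotient of the oriented flag di-graph $\gr^+$ by $\Aut^+(\po)$, and $\gr^+$ has exactly half as many vertices as $\gr$ (its vertices are only the black flags). Second, $T(\po)$ is the quotient of $\gr$ by $\Aut(\po)$. So the number of vertices of $T(\po)$ is $|\fl(\ma)|/|\Aut(\po)|$ by semiregularity, while the number of vertices of $T^+(\po)$ is $(|\fl(\ma)|/2)/|\Aut^+(\po)|$. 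Taking the ratio, $T(\po)$ and $T^+(\po)$ have the same number of vertices precisely when $2|\Aut^+(\po)| = |\Aut(\po)|$, i.e.\ when $\Aut^+(\po)$ has index exactly two in $\Aut(\po)$.

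\textbf{Translating the index condition into the graph condition.}
Having reduced the statement to ``the index $[\Aut(\po):\Aut^+(\po)]$ equals two,'' I would now invoke the discussion preceding the theorem: the index is exactly two if and only if $\Aut(\po)$ contains an orientation-reversing automorphism, equivalently if and only if $\po$ is \emph{not} chiral-a-la-Conway. By Proposition~\ref{oddcycles}, $\po$ is chiral-a-la-Conway if and only if $T(\po)$ has no odd cycles. Hence $\po$ fails to be chiral-a-la-Conway exactly when $T(\po)$ has an odd cycle. The remaining point is to fold in the possibility of a semi-edge: a semi-edge in $T(\po)$ corresponds to an $i$-edge of $\gr$ joining two flags in the same $\Aut(\po)$-orbit, which are in different parts of the bipartition, so the automorphism realizing this identification must reverse the bipartition, forcing the existence of an orientation-reversing automorphism. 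Thus ``$T(\po)$ has a semi-edge or an odd cycle'' is exactly the negation of chiral-a-la-Conway.

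\textbf{Assembling the two directions.}
Combining the two equivalences gives the result: $T(\po)$ and $T^+(\po)$ have the same number of vertices $\iff$ $[\Aut(\po):\Aut^+(\po)]=2$ $\iff$ $\po$ is not chiral-a-la-Conway $\iff$ $T(\po)$ has a semi-edge or an odd cycle. I would remark on the complementary case for clarity: when $\po$ \emph{is} chiral-a-la-Conway, Proposition~\ref{oddcycles} says $T(\po)$ has no odd cycle and (by the semi-edge observation above, or by Lemma~\ref{orientablekorbit}) no semi-edge, so $\Aut^+(\po)=\Aut(\po)$ and $T^+(\po)$ has exactly half as many vertices as $T(\po)$, consistent with the remark that $T^+(\po)$ has either $k$ or $k/2$ vertices.

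\textbf{The main obstacle.}
I expect the main subtlety to be the careful handling of the semi-edge case rather than the odd-cycle case, since Proposition~\ref{oddcycles} already packages the odd-cycle equivalence. The delicate point is verifying that a semi-edge in $T(\po)$ genuinely forces an orientation-reversing automorphism: one must argue that a semi-edge arises from an $\Aut(\po)$-orbit identification of two $i$-adjacent flags, that $i$-adjacency always crosses the bipartition of $\gr$ (true because $\gr$ is bipartite and edges join the two parts), and hence that the automorphism effecting this identification swaps black and white flags. Conversely one should note that an odd cycle and a semi-edge cannot be cleanly separated in general, but for the disjunction ``semi-edge or odd cycle'' this is harmless. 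Care is also needed to confirm the semiregularity statements that justify dividing by group orders, but these are supplied verbatim in the preceding text.
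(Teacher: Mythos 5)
Your overall route coincides with the paper's: the paper derives this theorem in one line from Proposition~\ref{oddcycles} and Lemma~\ref{orientablekorbit}, and your chain ``same vertex count $\iff$ index two $\iff$ not chiral-a-la-Conway $\iff$ semi-edge or odd cycle'' is exactly that derivation spelled out, with the semi-edge case handled by a correct direct argument (a semi-edge lifts to an automorphism carrying a flag to an adjacent flag, and since $\gr$ is connected and bipartite such an automorphism must swap the two parts, hence is orientation-reversing). That direct argument is in fact what makes the disjunction ``semi-edge or odd cycle'' close up correctly, so this part of your write-up is sound.

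The one step that does not hold up as written is the reduction of ``same number of vertices'' to ``index two'' by dividing cardinalities: $|V(T(\po))| = |\fl(\ma)|/|\Aut(\po)|$ and $|V(T^+(\po))| = (|\fl(\ma)|/2)/|\Aut^+(\po)|$. Nothing in the paper restricts to finite maniplexes, and the theorem is meaningful (and needed) for infinite maniplexes with finitely many flag orbits, such as tessellations; for those, these quotients of infinite cardinals are undefined, so semiregularity alone does not license the division. The repair is to replace the arithmetic by an orbit correspondence, which is what Lemma~\ref{orientablekorbit} encodes: if $\Aut(\po)$ contains an orientation-reversing automorphism, then every $\Aut(\po)$-orbit $O$ of flags meets $\bl(\po)$, and $O\cap\bl(\po)$ is a single $\Aut^+(\po)$-orbit (two black flags in one $\Aut(\po)$-orbit must be related by an orientation-preserving automorphism, since an orientation-reversing one would change their colour), so $O\mapsto O\cap\bl(\po)$ is a bijection from $V(T(\po))$ onto $V(T^+(\po))$; whereas if $\Aut(\po)=\Aut^+(\po)$, every flag orbit is monochromatic and $r_{n-1}$, via Lemma~\ref{orbitTOorbit}, pairs black orbits with white orbits, giving $|V(T(\po))|=2\,|V(T^+(\po))|$, which differs from $|V(T^+(\po))|$ precisely because the number of orbits is finite and nonzero. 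With that substitution your argument is complete and is essentially the paper's.
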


It is not difficult to see that if we are to consider for a moment an oriented symmetry type di-graph $T^+$ with an (undirected) hamiltonian path, then the construction of Section~\ref{Gen-autG} gives us a way to construct a generating set of the closed walks based at the starting vertex of the path (and Lemma~\ref{generatingwalks} implies that the set actually generates.) Hence, one can find generators for the group of orientation preserving automorphisms of an oriented maniplex, provided that it has an (undirected) hamiltonian path. In particular we have the following theorem.

\begin{theorem}
Let $\po$ be an oriented 2-orbit $(n-1)$-maniplex in class $2_I^+$, for some $I\subset \{0, \dots, n-2\}$. Then
\begin{enumerate}
\item If $n-2 \in I$, let  $j_0 \notin I$, then
$$\big\{ 
\alpha_{i,n-1}, \alpha_{j_0, n-1, i, j_0}, \alpha_{k, n-1, j_0, n-1} \mid  i \in I  \ k \notin I
\big\}$$
is a generating set for $Aut^+(\po)$.
\item If $n-2 \notin I$ but there exists $j_0 \notin I$, $j_0 \neq n-2$, then
$$\big\{ 
\alpha_{i,n-1}, \alpha_{j_0, n-1, i, j_0}, \alpha_{k, n-1, j_0, n-1} , \alpha_{n-1, n-2, j_0, n-1}\mid  i \in I  \ k \notin I
\big\}$$
is a generating set for $Aut^+(\po)$.
\item If $I =\{0, \dots, n-3\}$, then
$$\big\{ 
\alpha_{i,n-1}, \alpha_{n-2, n-1, i, n-1, n-2}, \alpha_{n-1, n-2, n-1, n-2} \mid  i \in I
\big\}$$
is a generating set for $Aut^+(\po)$.
\end{enumerate}
\end{theorem}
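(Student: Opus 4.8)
The plan is to obtain the theorem as the oriented analogue of Theorem~\ref{auto}: I apply the closed-walk construction of Section~\ref{Gen-autG}, verbatim, to the oriented symmetry type di-graph $T^+(\po)$ instead of to $T(\po)$. Since $\po$ is oriented $2$-orbit, $\Aut^+(\po)$ has two orbits on the black flags, so $T^+(\po)$ has exactly two vertices $v_1,v_2$, and its colours are the even generators $t_0,\dots,t_{n-2}$ of $\Mon^+(\po)$, where $t_i=r_{n-1}r_i$. I fix a base black flag $\Phi$ projecting to $v_1$. A closed walk in $T^+(\po)$ based at $v_1$ spells out a product of the $t_i$, hence an element $w\in\Mon^+(\po)$ with $\Phi^w$ in the orbit of $\Phi$; since $w$ is an even word of $\Mon(\po)$, $\Phi^w$ has the same colour as $\Phi$, so $\alpha_w\in\Aut^+(\po)$. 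As $\Mon^+(\po)$ is transitive on the black flags, every element of $\Aut^+(\po)$ arises this way, so a generating set of closed walks yields a generating set of $\Aut^+(\po)$. The paragraph preceding the theorem already records that, once $T^+(\po)$ carries an (undirected) Hamiltonian path, the set $G(\W)$ and Lemma~\ref{generatingwalks} transport without change; it then remains to read off $G(\W)$ and rewrite each walk-word through $t_i=r_{n-1}r_i$ into the $\alpha$-notation.

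For the path I would always route through an undirected connecting edge when one is available, and the three cases of the statement are exactly the three possibilities for the connecting colours $t_j$ with $j\notin I$. In Case~1 ($n-2\in I$) the dart-colour $t_{n-2}$ is a loop and every connecting $t_{j_0}$ is an undirected edge, since $t_{j_0}^2=1$ for $j_0\le n-3$; I use such a $t_{j_0}$. In Case~2 ($n-2\notin I$ but some $j_0\notin I$ with $j_0\neq n-2$) I again route through the undirected edge $t_{j_0}$ and treat the directed dart $t_{n-2}$ as an off-path edge. In Case~3 ($I=\{0,\dots,n-3\}$) the only connecting colour is $t_{n-2}$, so the Hamiltonian path is forced along this directed dart.

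With the path fixed I enumerate $G(\W)=\W_s\cup\W_e$ and translate. A loop $t_i$ ($i\in I$) at $v_1$ contributes the walk $t_i$, hence $\alpha_{i,n-1}$; a loop at $v_2$ contributes the conjugate $t_{j_0}t_it_{j_0}$, which, using the commutations $r_ir_{n-1}=r_{n-1}r_i$ and $r_{j_0}r_{n-1}=r_{n-1}r_{j_0}$ (valid since $i,j_0\le n-3$), collapses to $\alpha_{j_0,n-1,i,j_0}$; and each remaining connecting edge $t_k$ ($k\notin I$) contributes $t_kt_{j_0}$, that is $\alpha_{k,n-1,j_0,n-1}$. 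This is precisely the list of Case~1. Cases~2 and~3 use the same bookkeeping, with the directed dart supplying the extra generators: the walk out along the dart and back along the path gives $\alpha_{n-1,n-2,j_0,n-1}$ in Case~2, while in Case~3 the path followed by the reverse dart gives $\alpha_{t_{n-2}^2}=\alpha_{n-1,n-2,n-1,n-2}$, together with the $v_2$-loops conjugated by the dart. Throughout I use the remark after Theorem~\ref{auto} to discard the redundant walks, such as the trivial $\alpha_{j_0,n-1,j_0,n-1}=\mathrm{id}$, that the raw construction produces.

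The step I expect to be the main obstacle is the treatment of the colour $t_{n-2}$, whose darts are directed precisely because $t_{n-2}$ need not be an involution. For an undirected colour $t_k$ ($k\le n-3$) one has $t_k=t_k^{-1}$, so a single traversal already produces a self-inverse walk and one generator suffices; but the two directions of a $t_{n-2}$-dart realise the distinct elements $t_{n-2}$ and $t_{n-2}^{-1}$, so both must be retained. This is exactly why Case~2 carries one more generator than the naive Case~1 pattern and why Case~3 needs the generator reading as $t_{n-2}^2$. Concretely, I must check that Lemma~\ref{generatingwalks}---whose induction reduces the number of off-path edges of a closed walk---still applies when off-path darts may be traversed in either sense; the cleanest way is to regard the two oppositely directed $t_{n-2}$-darts as two distinct off-path edges of $T^+(\po)$, after which the induction runs unchanged and the enumeration of $G(\W)$ above is exhaustive.
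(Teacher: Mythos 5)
Your proposal is correct and takes essentially the paper's own route: the paper gives no separate argument for this theorem, deriving it directly from the remark that precedes it, namely that the closed-walk construction of Section~\ref{Gen-autG} and Lemma~\ref{generatingwalks} apply to $T^+(\po)$ once it has an (undirected) hamiltonian path and the oppositely directed $t_{n-2}$-darts are accounted for, which is exactly what you carry out, including the correct observation that the directed colour is what produces the extra generators in Cases~2 and~3. One slip worth fixing: in Case~1 the commutation $r_ir_{n-1}=r_{n-1}r_i$ you invoke fails precisely for $i=n-2\in I$, but the walk $t_{j_0}t_{n-2}t_{j_0}$ still recovers the listed generator $\alpha_{j_0,n-1,n-2,j_0}$ as its inverse (equivalently, traverse the loop at $v_2$ in the opposite sense), so the generating set, and hence your conclusion, is unaffected.
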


Given an oriented maniplex $\ma$ and its symmetry type graph $T(\ma)$, we shall say that $T^+(\ma)$ is the associated oriented symmetry type di-graph of $T(\po)$. Hence, given a symmetry type graph $T$ one can find its associated oriented symmetry type di-graph  $T^+$ by erasing all edges of $T$ and replacing them by the $n-1,i$ paths of $T$. Note that this replacement of the edges may disconnect the new graph. If that is the case, we take $T^+$ to be one of the connected components.

\subsection{Oriented symmetry type graphs with three vertices}

In a similar way as one can classify maniplexes with small number of flag orbits (under the action of the automorphism group of the maniplex) in terms of their symmetry type graph, one can classify oriented maniplexes with small number of flags (under the action of the orientation preserving automorphism group of the maniplex) in terms of their oriented symmetry type di-graph.

Let $\mathcal{M}$ be a $6$-orbit Chiral-a-la-Conway $(n-1)$-maniplex, with $n\geq4$. 
Let $T(\mathcal{M})$ be its symmetry type graph and $T^{+}(\mathcal{M})$ be its oriented symmetry type di-graph. 
Recall that $T(\mathcal{M})$ is a graph with 6 vertices and no semi-edges
or odd cycles, and that $T^{+}(\mathcal{M})$ is a di-graph with 3
vertices. 
Let $V=\{v_{1},v_{2},..,v_{6}\}$ be the vertex set of $T(\mathcal{M})$.
We may label the vertices of $T(\po)$ in such a way that
the edges $(v_{1},v_{2})$, $(v_{3},v_{4})$, $(v_{5},v_{6})$ are
coloured with the colour $(n-1)$, and that no two vertices of the
set $\{v_{1},v_{3},v_{5}\}$ are adjacent. Let $\W=\{w_{1},w_{3},w_{5}\}$
be the vertex set of $T^{+}(\po)$. Each $w_{i}\in \W$ corresponds
to the vertex $v_{i}\in V$, $i\in\{1,3,5\}$. 
In what follows, in the same way as in Section~\ref{sec:stg}, $(v_{i},v_{j})_{k}$
 denotes the $k$-coloured edge joining the vertices $v_{i}$ and
$v_{j}$, $v_{i},v_{j}\in V$, $k\in\{0,1,...,n-1)$; and $(w_{i},w_{j})_{k}$
 denotes the $(k,n-1)$-coloured edge joining the vertices $w_{i}$
and $w_{j}$, $w_{i},w_{j}\in \W$ and $k\in\{0,1,...,n-3)$.

Since there are no semi-edges in $T(\po)$, for each colour
$i\in\{0,...,n-3\}$ there is one edge (and one semi-edge) of colour
$(i,n-1)$ in $T^{+}(\po)$ if and only if the 2-factor of
$T(\po)$ of colours $i$ and $(n-1)$ consists of one 4-cycle
and one 2-cycle of alternating colours. Likewise, there are three
semi-edges of colour $(i,n-1)$ in $T^{+}(\po)$ if and only
if the 2-factor of $T(\po)$ of colours $i$ and $(n-1)$
consist of three 2-cycles. 
It is straightforward to see that there
are two consecutive edges of colour $(i,n-1)$ and $(j,n-1),$ $i\neq j$,
in $T^{+}(\po)$ if and only if the 2-factor of colours $i$
and $j$ consists of a single 6-cycle. It follows that if there are
two consecutive edges of colour $(i,n-1)$ and $(j,n-1)$ in $T^{+}(\po)$,
then $\left|i-j\right|<2$. 

Notice that the possible 2-factors of colour $(n-1)$ and $(n-2)$
in $T(\po)$ are either a single 6-cycle of alternating colours,
a 4-cycle along with a 2-cycle, or three separate 2-cycles. Hence,
the darts in $T^{+}(\po)$ are arranged in either a 3-cycle,
a 2-cycle along with a loop, or three separate loops. We proceed case
by case.

Consider the case when there are three loops in $T^{+}(\po)$.
Since oriented symmetry type di-graphs are connected, then without
loss of generality $(w_{1},w_{3})_{i}$ and $(w_{3},w_{5})_{i+1}$
must be edges of $T^{+}(\po)$. We may suppose that $(w_{1},w_{3})_{i}$
is the only edge joining $w_{1}$ and $w_{3}$. If there is a third
edge in $T^{+}(\po)$, then it is necessarily $(w_{3},w_{5})_{i-1}$.
Note that, since the edges coloured by $(n-1)$ and $(n-2)$ do not
lie on a 6-cycle in $T(\po)$, there are no restrictions on
the semi-edges of $T^{+}(\po)$. Thus, there is one oriented
symmetry type di-graph for each pair of colours $i$ and $i+1$, with
$i\in\{0,...,n-3\}$ and one for each triple $i-1$, $i$ and $i+1$,
$i\in\{1,...,n-3\}$. Therefore, there are $2n-7$ oriented symmetry
type di-graphs with 3 loops.

Consider the case when $T^{+}(\po)$ has only one loop. We
may suppose that the loop is in $w_{5}$ and the vertices $w_{1}$
or $w_{3}$ are joined by darts. This implies that $(v_{1},v_{4})_{(n-2)}$,
$(v_{2},v_{3})_{(n-2)}$ and $(v_{5},v_{6})_{(n-2)}$ are edges of
$T(\po)$. As $T^{+}(\po)$ is connected, there must
be an edge joining $w_{3}$ and $w_{5}$ of colour $(i,n-1).$ 
Necessarily
$i=n-3$, since the edges $(v_{1},v_{2})_{i}$, $(v_{2},v_{3})_{(n-2)}$,
$(v_{3},v_{6})_{i}$, $(v_{6},v_{5})_{n-2}$, $(v_{5},v_{4})_{i}$,
$(v_{4},v_{1})_{n-2}$ form a 6-cycle in $T(\po)$. Notice
that there are no restrictions on the semi-edges of $T^{+}(\po)$.
Hence, there are exactly two oriented symmetry type di-graph with
a single loop: one with a single edge of colour $(n-3,n-1)$ between $w_3$ and $w_5$, and
one with two edges of colours $(n-3,n-1)$ and $(n-4,n-1)$ between them.

Consider the case when the darts in $T^{+}(\po)$ are arranged
in a 3-cycle. It is clear that the 2-factor of $T(\po)$ of
colours $(n-2)$ and $(n-1)$ is a single 6-cycle. Therefore, if $i\in\{0,...,n-4\}$,
the 2-factor of $T(\po)$ of colours $i$ and $(n-1)$ cannot
consist of three 2-cycles, as this implies the existence of a 6-cycle
of alternating colours $i$ and $(n-2)$ such that $\left|i-(n-2)\right|\geq2$.
That is, $T^{+}(\po)$ has one edge (and one semi-edge) of
colour $(i,n-1)$ for each $i\in\{0,...,n-4\}$ and either one edge
and a semi-edge, or three semi-edges for colour $(n-3,n-1).$ Note
that if $n\geq7$, the set $\{0,...,n-4\}$ has more than three elements
and thus all edges of colour $(i,n-1)$ in $T^{+}(\po)$, $i\in\{0,...,n-4\}$,
must be joining the same pair of vertices. Otherwise, there would
be at least two consecutive edges of colours $(i,n-1)$ and $(j,n-1)$,
with $\left|i-j\right|\geq2$. Figure~\ref{3orient} below shows the only four
possible oriented symmetry type di-graphs with a 3-cycle of darts
and at least two consecutive edges. Two correpond to 4-maniplexes,
one to 3-maniplexes and one to 5-maniplexes. These will be treated as
special cases.

\begin{figure}[htbp]
\begin{center}
\includegraphics[width=10cm]{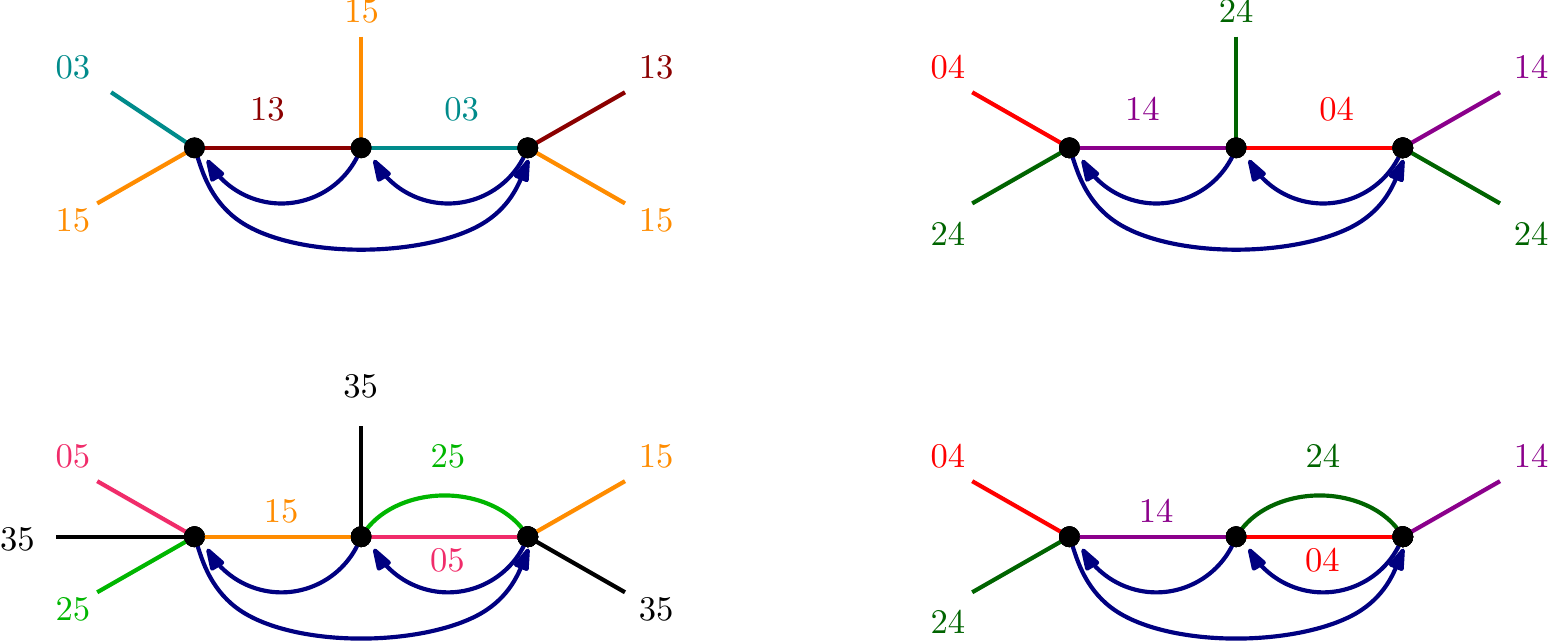}
\caption{Oriented symmetry type di-graphs with 3 vertices and one directed 3-cylce, of 3-, 4- and 5-maniplexes}
\label{3orient}
\end{center}
\end{figure}

We may suppose that $T^{+}(\po)$ has no consecutive edges.
It follows that here are exactly two oriented symmetry type di-graph
with a 3-cycle of darts: one with an edge joining the same pair of
vertices for each colour $i\in\{0,...,n-3\},$ and one with three
semi-edges of colour $(n-3,n-1)$ and an edge joining the same pair
of vertices for each colour $i\in\{0,...,n-4\}$.

Considering all the cases above, there are $(n-3)+(n-4)+2+2=2n-3$
oriented symmetry type graphs with three vertices for oriented maniplexes of
rank $n\geq6$; $2n-2=6$ for oriented maniplexes of rank 3; $2n-1=9$ for oriented maniplexes
of rank 4; and $2n-2=10$ for oriented maniplexes of rank 5.

\section*{Acknowledgments}

This work was done with the support of ''Programa de Apoyo a Proyectos de Investigaci\'on e Innovaci\'on Tecnol\'ogica (PAPIIT) de la UNAM,  IB101412 {\em Grupos y gr\'aficas asociados a politopos abstractos}". The second author was partially supported by Slovenian Research Agency (ARRS) and the third author was  partially supported by CONACyT under project 166951 and by the program ''Para las mujeres en la ciencia L'Oreal-UNESCO-AMC 2012"

\end{document}